\newenvironment{edges}{\begin{pgfonlayer}{edges}}{\end{pgfonlayer}}
\tikzstyle{vertex}=[inner sep = 0pt, minimum width=4pt, draw=black, fill=black, shape=circle]
\renewcommand{\subset}{\subseteq}
\newcommand{\gpoint}[2]{\node[style=vertex, label=#1:$#2$]}
\newcommand{\apoint}[1]{\gpoint{above}{#1}}
\newcommand{\bpoint}[1]{\gpoint{below}{#1}}
\newcommand{\rpoint}[1]{\gpoint{right}{#1}}
\newcommand{\makecvert}[1]{\draw[red, shift only] (#1) circle (5pt)}
\tikzstyle{blueedge}=[draw=blue, decorate, decoration={snake, segment length=.2cm, amplitude=.05cm}]
\tikzstyle{rededge}=[draw=red]
\tikzstyle{medge}=[draw=red, line width=2pt]
\tikzstyle{reddots}=[dashed, draw=red]
\newcommand{\gp}[1]{\textcolor{black}{#1}}
\newcommand{\totdom}{\bar{\gamma}}
\newcommand{\cee}{\mathcal{C}}
\newtheorem{proposition}{Proposition}[section]
\newtheorem{lemma}[proposition]{Lemma}
\newtheorem{claim}{Claim}
\newtheorem{theorem}[proposition]{Theorem}
\newtheorem{corollary}[proposition]{Corollary}
\newtheorem{conjecture}[proposition]{Conjecture}
\newtheorem{ques}[proposition]{Question}
\theoremstyle{definition}
\theoremstyle{remark}
\newcommand{\st}{\colon\,}
\newcommand{\sizeof}[1]{\left\lvert{#1}\right\rvert}
\newcommand{\floor}[1]{\left\lfloor{#1}\right\rfloor}
\newcommand{\xee}{\mathcal{X}}
\newcommand{\tee}{\mathcal{T}}
\newcommand{\yee}{\mathcal{Y}}
\newcommand{\nt}{n_t}
\newcommand{\nl}{n_{\ell}}
\newcommand{\nf}{n_4}
\newcommand{\nx}{n_0}
\newenvironment{proofclaim}{\begin{proof}[Proof of Claim]}{\end{proof}}
\title[Strong coloring $2$-regular graphs]{Strong coloring $2$-regular graphs: cycle restrictions and partial colorings}
\author{Jessica McDonald}
\address{Department of Mathematics and Statistics, Auburn University, Auburn, AL 36849}
\email{mcdonald@auburn.edu}
\thanks{Supported in part by NSF grant DMS-1600551.}
\author{Gregory J. Puleo}
\address{Department of Mathematics and Statistics, Auburn University, Auburn, AL 36849}
\email{gjp0007@auburn.edu}
\begin{document}
\maketitle

\begin{abstract} Let $H$ be a graph with $\Delta(H) \leq 2$, and let
  $G$ be obtained from $H$ by gluing in vertex-disjoint copies of
  $K_4$. We prove that if $H$ contains at most one odd cycle of length
  exceeding $3$, or if $H$ contains at most $3$ triangles, then
  $\chi(G) \leq 4$. This proves the \emph{Strong Coloring Conjecture}
  for such graphs $H$. For graphs $H$ with $\Delta=2$ that are not
  covered by our theorem, we prove an approximation result towards the
  conjecture.
\end{abstract}

\section{Introduction}

In this paper all graphs are assumed to be simple, unless explicitly
stated otherwise. The reader is referred to \cite{WestText} for
standard terminology. One non-standard term we use is the idea of
\emph{gluing} one graph onto another. Given vertex-disjoint graphs
$G_1, \ldots, G_q$, $H$ with
$|\bigcup _{1\leq i\leq q} V(G_1)|\leq |V(H)|$, we \emph{glue
  $G_1, \ldots, G_q$ onto $H$} by defining an injective function
$f:\cup_{1\leq i\leq q} V(G_i)\rightarrow V(H)$, and then forming a
new graph $G$ with $V(G) = V(H)$ and $E(G) = E(H) \cup \bigcup_{1\leq i \leq q} E_i,$
where
$E_i  =\{f(a)f(b)\st ab\in E(G_i), f(a)f(b)\not\in E(H)\}.$
The graph $G$ is said to have been obtained from $H$ by \emph{gluing
  in} $G_1, \ldots, G_q$.

In this paper we are primarily concerned with the chromatic number of a graph obtained by gluing vertex-disjoint copies of $K_4$ onto a 2-regular graph. To contextualize this, consider the following general question.

\begin{ques}\label{ques:Kt} Suppose that $G$ is obtained from a 2-regular graph $H$ by gluing in some number of vertex-disjoint copies of $K_t$. Is $\chi(G)\leq t$?
\end{ques}

When $H$ is a cycle and $t=3$,  Question \ref{ques:Kt} is the famous ``cycle plus triangles problem'' originating in the work of Du, Hsu, and Hwang \cite{DHH}, popularized by Erd\H{o}s, and resolved affirmatively by Fleischner and
Stiebitz~\cite{cycletri} (see \cite{cycletri} for more on the history of this particular problem). The result for $t=3$ does not hold for all 2-regular $H$; in particular a $C_4$ component in $H$ can allow $K_4$ to be created after gluing in triangles, and Fleischner and Stiebitz \cite{fleischner-remarks} found an infinite family of counterexamples without such $C_4$ components as well, answering a further question of Erd\H{o}s ~\cite{erdos-fav}.

Haxell \cite{haxell-strong} has answered Question \ref{ques:Kt} affirmatively whenever $t\geq 5$ (in fact she proved something stronger, as we shall discuss shortly). Question \ref{ques:Kt} remains open for $t=4$ however. There is an easy affirmative argument for $t=4$ when $H$ consists only of cycles of length 3 or 4, and the problem can also be resolved positively when $H$ has girth at least 4 (see Pei \cite{Pei}). In the present paper we step into the intermediate ground, where $H$ has both triangles and longer odd cycles, and prove the following result.

\begin{theorem}\label{thm:main-cycles}
  Let $H$ be a graph with $\Delta(H) \leq 2$, and let $G$ be obtained
  from $H$ by gluing in vertex-disjoint copies of $K_4$. If $H$
  contains at most one odd cycle of length exceeding $3$, or if $H$
  contains at most $3$ triangles, then $\chi(G) \leq 4$.
\end{theorem}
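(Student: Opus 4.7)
My plan is to prove the theorem by induction on a parameter measuring the complexity of $H$ (for instance, the number of glued $K_4$'s plus the number of cycles of various types), handling the two sufficient conditions through related but distinct strategies. A useful preliminary observation is that since $\Delta(H) \leq 2$, the graph $H$ decomposes as a disjoint union of paths and cycles, so the analysis reduces to understanding how the glued $K_4$'s interact with these components. Isolated vertices, paths, and $4$-cycles are the easiest pieces and can be colored last (or absorbed into an inductive peeling); the real difficulty comes from triangles and from long odd cycles.

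For the \emph{at most one long odd cycle} case, my approach would be to first reduce to the subcase where $H$ has no long odd cycle (so every cycle of $H$ is a triangle or an even cycle), and then add back the single long odd cycle. In the reduced subcase, one tries to specify a bijection $\{1,2,3,4\} \to V(Q)$ on each $K_4$ $Q$ consistent with the $H$-edges; since each $K_4$ uses all four colors and triangles and even cycles impose relatively mild constraints, this should be achievable by an induction that peels off one compatible $K_4$ at a time. Adding a single long odd cycle $C$ introduces a parity obstruction, but because only one such cycle is present the obstruction can be absorbed at a single vertex $v \in C$, using the fact that the $K_4$ through $v$ supplies all four colors and hence can break the parity.

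For the \emph{at most three triangles} case, I would handle the triangles and their incident $K_4$'s first. The (at most) three triangles occupy (at most) nine vertices lying in at most nine $K_4$'s; one can try to color these $K_4$'s carefully, perhaps by case analysis on how triangle vertices are distributed among them, and then extend the coloring to the rest of $G$. The remaining instance is triangle-free in $H$, so one would appeal to a \emph{prescribed-coloring} version of Pei~\cite{Pei}'s triangle-free result. The main obstacle in both cases, and I expect the technically most delicate part of the paper, will be proving that a partial 4-coloring of a few $K_4$'s always extends to the rest: this is presumably the ``partial coloring'' machinery flagged in the paper's title. Controlling how precolor constraints propagate along cycles of $H$---especially when those cycles are odd---appears to be the genuine difficulty, and I anticipate that surmounting it requires a dedicated lemma whose statement is strong enough to be inductively self-sustaining.
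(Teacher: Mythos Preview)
Your proposal is an outline rather than a proof, and several of the steps you flag as ``the technically most delicate part'' are in fact genuine gaps. The induction by ``peeling off one compatible $K_4$ at a time'' has no clear invariant: removing a $K_4$ from $G$ does not leave a smaller instance of the same problem, since the four vertices remain in $H$ and their $H$-edges still constrain any coloring. Your plan to ``absorb the parity obstruction at a single vertex'' of the long odd cycle is unsubstantiated---having all four colors available at one $K_4$ does not obviously let you fix a parity conflict propagated along the rest of the cycle, because the other three vertices of that $K_4$ are themselves tied into other cycles of $H$. For the few-triangles case you explicitly rely on a ``prescribed-coloring version of Pei's result,'' which you do not prove and which does not appear in the literature; extending a partial coloring through a triangle-free $H$ is not obviously easier than the original problem, and list-coloring strengthenings of cycle-plus-triangles are already known to be false in general.

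The paper's route is entirely different and avoids extension arguments altogether. It proves two quantitative deletion theorems: (i) one can delete at most $\sizeof{\cee}/2$ vertices (where $\cee$ is the set of long odd cycles) to make $G$ $4$-colorable, and (ii) one can delete at most $\sizeof{\tee}/4$ vertices (where $\tee$ is the set of triangles) to achieve the same. Theorem~\ref{thm:main-cycles} then follows by integer rounding: $\sizeof{\cee}\le 1$ or $\sizeof{\tee}\le 3$ forces the deleted set to be empty. Theorem~(i) is proved by fixing a perfect blue matching $J$ of the glued $K_4$'s and, for each transversal $T$ of the odd cycles, studying the red--blue bipartite graph $B(T)=M(T)\cup J$; one optimizes $T$ so that each transversal vertex lies in a different component of $B(T)$ from its cyclic successor, and then a random $2$-coloring of the components, refined to a $4$-coloring, leaves at most half the long-odd-cycle representatives ``unhappy.'' Theorem~(ii) uses ISR machinery: a deficiency version of Haxell's total-domination criterion gives a partial ISR of the $H$-cycles missing at most $\lfloor\sizeof{\tee}/4\rfloor$ of them, a combining lemma merges it with an ISR of the $K_4$'s, and deleting the resulting independent set reduces to a cycle-plus-triangles instance handled by Fleischner--Stiebitz. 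Neither argument attempts to extend a precoloring.
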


We also prove the following approximation result for graphs $H$ not dealt with by the above theorem.

\begin{theorem}\label{thm:main-fraction}
  Let $H$ be a graph with $\Delta(H) \leq 2$, and let $G$ be obtained
  from $H$ by gluing in vertex-disjoint copies of $K_4$. Then there
  is a set of vertices $Z$ with $\sizeof{Z} \leq \sizeof{V(G)}/22$
  such that $\chi(G-Z) \leq 4$.
\end{theorem}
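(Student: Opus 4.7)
Let $t$ and $k$ denote the number of triangles and the number of odd cycles of length greater than $3$ in $H$, respectively. If $t \leq 3$ or $k \leq 1$, then Theorem~\ref{thm:main-cycles} applies directly and we may take $Z = \emptyset$. Otherwise $t \geq 4$ and $k \geq 2$, and we will construct $Z$ explicitly.

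The plan is to select $Z$ by choosing one vertex from each of $t - 3$ triangles (if $t - 3 \leq k - 1$) or one vertex from each of $k - 1$ long odd cycles (otherwise), so that $H - Z$ has either at most $3$ triangles or at most one odd cycle of length exceeding $3$. To then apply Theorem~\ref{thm:main-cycles} we must account for the $K_4$'s that lose vertices to $Z$: for each $v \in Z$ lying in some $K_4$ of $G$, introduce a dummy vertex $v'$ which is isolated in the modified host graph $H^+$ and included in the same $K_4$. Then $\Delta(H^+) \leq 2$, $H^+$ has the same triangle and long-odd-cycle counts as $H - Z$, and the padded graph $G^+$ satisfies $G^+ - \{v' \st v \in Z\} = G - Z$. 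Applying Theorem~\ref{thm:main-cycles} to $G^+$ thus yields $\chi(G - Z) \leq 4$.

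The main obstacle is establishing the quantitative bound $|Z| \leq |V(G)|/22$. The naive estimate $|Z| \leq \min(t - 3, k - 1)$ combined with $|V(G)| \geq 3t + 5k$ is not sharp enough in all cases: for example, if $t = k = 100$ and every long odd cycle has length exactly $5$, then $|V(G)| \geq 800$, so $|V(G)|/22 < 37$, whereas $\min(t-3, k-1) = 97$. To close this gap, I would prove a more flexible auxiliary statement covering intermediate configurations---for instance, establishing $4$-colorability whenever at most $a$ triangles \emph{and} at most $b$ long odd cycles appear, for a useful range of pairs $(a,b)$---either by extracting such a statement from the proof of Theorem~\ref{thm:main-cycles} or by an independent argument. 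Combined with a charging scheme that assigns each deleted vertex a budget paid by at least $22$ vertices of $G$ (likely exploiting the $K_4$-neighborhoods of the cycles being broken, so that even overlapping $K_4$'s contribute enough unremoved vertices), this should produce the desired ratio.
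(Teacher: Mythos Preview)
Your proposal is not a proof: you yourself supply the counterexample ($t=k=100$, all long odd cycles of length $5$) showing that the deletion bound $\min(t-3,k-1)$ exceeds $|V(G)|/22$, and the remainder of the write-up is a sketch of what you \emph{hope} would work (``a more flexible auxiliary statement'', ``a charging scheme'') rather than an argument. The reduction-plus-dummy-vertex construction is fine as far as it goes, but it only converts the problem into the quantitative question you then leave open.

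The paper's route is quite different from yours and avoids the difficulty entirely. It does \emph{not} try to delete vertices until Theorem~\ref{thm:main-cycles} applies. Instead it proves two quantitative partial-coloring results directly: one (Theorem~\ref{thm:longodd-910}) producing a set $Z$ with $|Z|\le |\mathcal C|/2$, where $\mathcal C$ is the set of odd cycles of length $>3$; and one (Theorem~\ref{thm:triangle-11}) producing a set $Z$ with $|Z|\le |\mathcal T|/4$, where $\mathcal T$ is the set of triangles. Writing $n_t,n_\ell,n_0$ for the number of vertices in triangles, in long odd cycles, and elsewhere, these give
\[
n_4 \ge n_0+n_t+\tfrac{9}{10}n_\ell
\quad\text{and}\quad
n_4 \ge n_0+\tfrac{11}{12}n_t+n_\ell,
\]
where $n_4$ is the size of a largest $4$-colorable induced subgraph. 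A convex combination with weight $\lambda=5/11$ on the first inequality equalizes the coefficients and yields $n_4\ge \tfrac{21}{22}|V(G)|$. In other words, the ``more flexible auxiliary statement'' you gesture at is exactly the pair of theorems that constitute the real work of the paper; Theorem~\ref{thm:main-cycles} is the $|\mathcal C|\le 1$ and $|\mathcal T|\le 3$ special cases of those, and is too coarse a tool to recover the $1/22$ bound by the delete-and-reduce method.
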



Theorem \ref{thm:main-cycles} actually proves the \emph{Strong
  Coloring Conjecture} for graphs $H$ of the sort we describe in the
theorem. Stating this conjecture in full generality requires some
technical setup which we shall now begin to give.






Given an $n$-vertex graph $H$ where $n$ is divisible by
  $t$, we say that $H$ is \emph{strongly $t$-colorable} if, for any
partition of $V(H)$ into parts of size $t$, $H$ has a $t$-coloring
where each color class is a transversal of the partition (i.e. where
each color class contains exactly one vertex from each part of the
partition). In the case where $n$ is not divisible by $t$, we say
that $H$ is \emph{strongly t-colorable} if $H'$ is strongly
  $t$-colorable, where $H'$ is obtained from $H$ by adding
$t\lceil\tfrac{n}{t}\rceil-n$ isolated vertices (the minimum amount to
ensure divisibility by $t$). The notion of strong coloring was
introduced independently by Alon \cite{Al} and Fellows \cite{Fel}
about thirty years ago.

In the definition of strong coloring, instead of requesting that each
color class is a transversal of the partition, we can equivalently
ask for a copy of $K_t$ to be glued to each part of the
partition, and then ask for the resulting graph to be
$t$-colorable. Hence, given an $n$-vertex graph $H$ with $t\mathrel{|}n$, Question
\ref{ques:Kt} is exactly asking whether or not $H$ is strongly
$t$-colorable. However, if $t\mathrel{\not|} n$, an affirmative answer to
Question \ref{ques:Kt} may not imply strong
$t$-colorability. 
In particular, Fleischner and Stiebitz's \cite{cycletri} result implies
that cycles with length divisible by 3 are strongly 3-colorable, but
it is not true that all cycles have this property (since by adding
$K_3$'s to $C_4$ plus two necessary isolates we can create $K_4$). On
the other hand, since we only require $\Delta(H)\leq 2$ in Theorem
\ref{thm:main-cycles}, we get the following as an immediate corollary.

\begin{corollary}\label{cor:strong}
  Let $H$ be a graph with $\Delta(H) \leq 2$ which either
  contains at most one odd cycle of length exceeding $3$, or
  contains at most $3$ triangles. Then $H$ is strongly 4-colorable.
\end{corollary}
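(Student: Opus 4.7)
The plan is to deduce this directly from Theorem~\ref{thm:main-cycles} by unpacking the equivalent ``glue-in-$K_t$'' formulation of strong colorability mentioned in the preceding paragraph. First I would let $H$ be as in the hypothesis, with $n:=|V(H)|$, and form $H'$ by adjoining $4\lceil n/4\rceil-n$ isolated vertices so that $|V(H')|$ is divisible by $4$. Since isolated vertices affect neither $\Delta$ nor the cycle structure, $H'$ still satisfies $\Delta(H')\leq 2$, and $H'$ still either contains at most one odd cycle of length exceeding $3$, or contains at most $3$ triangles.

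Next, I would fix an arbitrary partition $\mathcal{P}$ of $V(H')$ into parts of size $4$ and glue a copy of $K_4$ onto each part of $\mathcal{P}$, obtaining a graph $G$. Since the parts of $\mathcal{P}$ are pairwise disjoint, these $K_4$'s are vertex-disjoint, so $G$ is precisely of the form considered by Theorem~\ref{thm:main-cycles}. Applying that theorem yields $\chi(G)\leq 4$.

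Finally, in any proper $4$-coloring of $G$, each glued $K_4$ must use all four colors, so each color class intersects each part of $\mathcal{P}$ in exactly one vertex. Restricting such a coloring to $V(H')$ gives a strong $4$-coloring of $H'$ with respect to $\mathcal{P}$, and since $\mathcal{P}$ was arbitrary, $H'$ is strongly $4$-colorable; by definition this is exactly what it means for $H$ to be strongly $4$-colorable.

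There really is no substantive obstacle here: Theorem~\ref{thm:main-cycles} does all the work, and the only thing to be careful about is the bookkeeping of adjoining isolated vertices to $H$ when $4\nmid n$ and checking that the hypotheses are preserved, which is immediate.
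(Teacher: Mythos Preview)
Your proof is correct and is exactly the argument the paper has in mind: the paper simply declares the corollary ``immediate'' from Theorem~\ref{thm:main-cycles} because that theorem only requires $\Delta(H)\leq 2$, and you have correctly unpacked the definition of strong $4$-colorability (including the padding with isolated vertices when $4\nmid n$) to make this explicit.
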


It is not obvious that a strongly $t$-colorable graph is necessarily
strongly $(t+1)$-colorable, but in fact this can be shown using a
short argument due to Fellows \cite{Fel}. Given this, it makes sense
to define the \emph{strong chromatic number} of $H$, $s\chi(H)$,
as the minimum $t$ such that $H$ is strongly $t$-colorable. Note that
for any graph $H$, $s\chi(H)\geq \Delta+1,$ since if a clique was
added to the neighborhood of a $\Delta$-vertex, a copy of
$K_{\Delta+1}$ would be created in the new graph, and obviously that
is not $\Delta$-colorable. The previously-alluded to result by Haxell
\cite{haxell-strong} says that for any graph $H$,
$s\chi(H)\leq 3\Delta-1$. When $\Delta=2$ this says that
$s\chi(H)\leq 5$ (hence answering Question \ref{ques:Kt} affirmatively
for $t\geq5$). Fleischner and Stiebitz \cite{fleischner-remarks} have
given, for each $\Delta$, an example of a $\Delta$-regular graph $H$
for which $s\chi(H) \geq 2\Delta$, and hence the following conjecture
would be best possible if it is true. (Attribution for this conjecture
is somewhat tricky -- according to \cite{ABS}, it may have first
appeared explicitly in a 2007 paper by Aharoni, Berger and Ziv
\cite{ABZ} after being ``folklore'' for a while, although it could
also be considered implicit in the 2004 paper of
  Haxell~\cite{haxell-strong}.)

\begin{conjecture}[Strong Coloring Conjecture]\label{conj:scc} For any graph $H$, $s\chi(H)\leq 2\Delta(H)$.
\end{conjecture}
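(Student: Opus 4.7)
This is the Strong Coloring Conjecture, a notorious open problem — even the present paper establishes it only for a restricted class of $\Delta=2$ cases — so my plan is necessarily a roadmap rather than a complete route. The natural strategy is induction on $\Delta$, with the inductive step modeled on the partial-coloring result Theorem~\ref{thm:main-fraction}: glue the required $K_{2\Delta}$'s onto the parts of the partition, extract one color class that selects a vertex from almost every part while remaining independent in $H$, remove it, and iterate until every part is covered.

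To produce each color class I would follow Haxell's topological framework, applying a version of the Aharoni--Berger--Ziv independent-transversal theorem to the auxiliary graph whose edges are $E(H)$ together with the glued clique edges. The aim is to use the $\Delta(H)\le 2\Delta$ hypothesis more tightly than Haxell's general $3\Delta-1$ argument, exploiting the fact that each part has exactly $2\Delta$ vertices while each vertex is constrained by at most $2\Delta-1$ neighbors outside its part. In the $\Delta=2$ regime this amounts to extending Theorem~\ref{thm:main-cycles} past the ``at most one long odd cycle or at most three triangles'' hypothesis, by chaining the local exchange / discharging arguments used there into a global argument that can handle arbitrarily many triangles together with arbitrarily many long odd cycles.

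The main obstacle is exactly where this paper and every previous partial result halt: the topological lower bounds currently available on the index of the relevant independence complex lose a factor of roughly $3/2$, which is precisely the gap between Haxell's $3\Delta-1$ theorem and the conjectured $2\Delta$. Closing it appears to require either a new combinatorial gadget that recovers the missing factor — perhaps by upgrading Theorem~\ref{thm:main-fraction} to sharpen the $1/22$ of uncovered vertices down to $0$ through an iterative exchange — or a genuinely stronger topological connectivity bound tailored to clique-augmented $2$-regular graphs. Lacking a clear precedent for either step, I would expect this plan to yield further partial progress toward Conjecture~\ref{conj:scc} rather than a complete proof, and I would first test the approach by attempting to remove the structural hypotheses in Theorem~\ref{thm:main-cycles} while maintaining the $\chi(G)\le 4$ conclusion.
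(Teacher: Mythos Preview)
The statement you were asked to prove is Conjecture~\ref{conj:scc}, the Strong Coloring Conjecture, and the paper does \emph{not} prove it; it is presented explicitly as an open conjecture, with the paper establishing only the very special cases recorded in Corollary~\ref{cor:strong}. So there is no ``paper's own proof'' to compare against.

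You correctly recognize this, and your write-up is a candid research outline rather than a proof. As such it is honest and well-informed: you identify the natural inductive/iterative framework, you point to the topological independent-transversal machinery of Haxell and Aharoni--Berger--Ziv as the relevant toolkit, and you accurately locate the obstruction as the $3\Delta-1$ versus $2\Delta$ gap in the known connectivity bounds for independence complexes. Your suggestion to first try removing the structural hypotheses from Theorem~\ref{thm:main-cycles} is exactly the next natural test case. None of this constitutes a proof, of course, but you say as much yourself; the assessment that the plan would more likely yield further partial progress than a full resolution is the correct one given the current state of the problem.
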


The Strong Coloring Conjecture is trivial for $\Delta=1$, where it
asks essentially for the union of two matchings to be bipartite.  The
conjecture remains open for all $\Delta \geq 2$, but several partial
results are known for general $\Delta$.  Haxell~\cite{haxell-improved}
proved that $s\chi(H) \leq (\tfrac{11}{4}+\varepsilon)\Delta(H)$ when
$\Delta(H)$ is sufficiently large, improving her general upper bound
of $s\chi(H) \leq 3\Delta(H)-1$. Aharoni, Berger and Ziv~\cite{ABZ}
proved a fractional-coloring version of the conjecture.  Lo and
Sanhueza-Matamala~\cite{LS} proved the following asymptotic version
of the conjecture: for any constants $c, \epsilon > 0$, there
  exists an integer $n_0$ such that $s\chi(H) \leq (2+\epsilon)\Delta(H)$ for every graph $H$ with
  $\sizeof{V(H)} \geq n_0$ and $\Delta(H) \geq cn$.

In contrast to many of the above results, which weaken the conclusion
of Conjecture~\ref{conj:scc} in some form or another, our Corollary
\ref{cor:strong} proves the exact conclusion of
Conjecture~\ref{conj:scc} in several new cases.
Corollary~\ref{cor:strong} improves previous work by Fleischner and
Stiebitz \cite{fleischner-remarks} who (separately from their cycle +
triangles solution) verified the conjecture for all cycles $H$. Our
result also strengthens the previously discussed results of Pei
\cite{Pei}, who verified Conjecture \ref{conj:scc} when $H$ consists
only of cycles of length 3 or 4, or has girth at least 4.

If $H$ is a graph and $V_1, \ldots, V_n$ are
disjoint subsets of $V(H)$, an \emph{independent set of representatives (ISR)}
of $(V_1, \ldots, V_n)$ is an independent set $R$ containing exactly
one vertex from each set $V_i$.  Hence in the definition of strongly $t$-colorable, we may replace ``$H$ has a $t$-coloring where each color class is a transversal of the partition'' with ``$H$ has $t$ disjoint ISRs of the partition''.  The following theorem of Haxell \cite{haxell-maxdeg} (proved for a general $\Delta$ but stated here for the case $\Delta=2$) guarantees the existence of one ISR (where Conjecture \ref{conj:scc} asks for four).
\begin{theorem}[Haxell~\cite{haxell-maxdeg}]\label{thm:haxell-maxdeg}
  If $H$ is a graph with $\Delta(H)\leq 2$ and $V_1, \ldots, V_n$
  are disjoint subsets of $V(H)$ with each $\sizeof{V_i} \geq 4$,
  then $(V_1, \ldots, V_n)$ has an ISR.
\end{theorem}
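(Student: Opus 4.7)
The plan is to follow Haxell's standard augmenting-type argument for independent systems of representatives, in which the $2\Delta = 4$ threshold appears naturally via a counting step. We argue by contradiction. Suppose $(V_1, \ldots, V_n)$ has no ISR, and let $R$ be a partial ISR of maximum size; after relabeling, write $R = \{r_2, \ldots, r_{k+1}\}$ with $r_i \in V_i$ and $V_1$ unrepresented. The goal is to construct, in finitely many steps, an augmenting structure that produces a strictly larger partial ISR, contradicting maximality.

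The central device is a growing pair $(J, S)$, where $J \subseteq \{2, \ldots, n\}$ is a set of ``reached'' indices and $S \subseteq R$ is the corresponding set $\{r_j : j \in J\}$ of ``used'' representatives, together with a record of why each $r_j \in S$ was added (a blocked vertex in some earlier $V_{j'}$, with $j' \in J \cup \{1\}$, whose neighborhood in $R$ landed on $r_j$). I would initialize by letting $J$ encode the blockers of vertices in $V_1$: since $V_1$ has no ISR-extending vertex, every $v \in V_1$ satisfies $N(v) \cap R \neq \emptyset$, and each such $v$ has at most $\Delta(H) = 2$ blockers in $R$, contributing at most $2|V_1|$ edges to $S$. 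At each later stage, I pick some $j \in J$ and examine $V_j \setminus \{r_j\}$: a vertex $v \in V_j \setminus \{r_j\}$ is \emph{permitted} if $N(v) \cap R \subseteq S$. If every blocker of every vertex of $V_j \setminus \{r_j\}$ already lies in $S$, then every vertex of $V_j$ is accounted for, and I move on; otherwise I grow $(J, S)$ by adding the new blocker.

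The process must terminate because $J$ is bounded by $\{2, \ldots, n\}$. At termination, for every $j \in J \cup \{1\}$, every $v \in V_j \setminus R$ has $N(v) \cap R \subseteq S$. Counting incidences from the sets $\{V_j : j \in J \cup \{1\}\}$ into $S$, each vertex contributes at most $2$ incidences, giving a total of at most $2 \sum_{j \in J \cup \{1\}} |V_j|$; on the other hand each $r_\ell \in S$ is incident to at most $2$ vertices in $\bigcup V_j$, giving a total of at most $2|S|$. Since $|S| = |J|$ and $|V_j| \geq 4 = 2\Delta$ for every $j$, the two inequalities cannot both hold once $|J \cup \{1\}| > |J|$, which is always. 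This numerical gap forces the existence of some $V_j$ (with $j \in J \cup \{1\}$) containing a vertex $v$ whose blockers can all be simultaneously freed by swapping within the recorded history: perform the swap chain, ending with a newly permitted vertex of $V_1$, to produce an ISR of size $k+1$.

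The main obstacle, and the reason this argument is subtle rather than routine, is the bookkeeping that ensures the swap chain actually yields an independent set. The tree recording ``$r_j$ was brought into $S$ as a blocker of a permitted vertex $w_j$ in some earlier $V_{j'}$'' must be acyclic and must be traversed in the right order, so that replacing each $r_j$ by $w_j$ along a root-to-leaf path preserves independence at each step (the relevant edges to check are exactly the ones counted in the $2\Delta$ budget). Getting this ordering right, and showing that the hypothesis $|V_j| \geq 4$ guarantees the process has enough ``room'' to both grow until termination and then to unwind into a legal augmentation, is where the bound $|V_i| \geq 2\Delta$ is sharp and where I would expect to spend most of the proof effort.
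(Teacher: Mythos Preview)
The paper does not prove this theorem; it is quoted as a known result of Haxell~\cite{haxell-maxdeg} and used as a black box (in the proof of Theorem~\ref{thm:triangle-11} and in the discussion of Theorem~\ref{thm:main-2isr}). So there is no in-paper proof to compare against.

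Evaluating your sketch on its own merits: the overall strategy---build an alternating/augmenting tree rooted at the unrepresented class $V_1$, grow it until it stabilizes, then extract a contradiction from a counting argument---is indeed the skeleton of the elementary proof of Haxell's ISR theorem. However, the counting step as you wrote it does not yield a contradiction. You give two \emph{upper} bounds on the number of incidences between $\bigcup_{j\in J\cup\{1\}} V_j$ and $S$: at most $2\sum_j |V_j|$ from the degree bound on the left side, and at most $2|S|$ from the degree bound on the right side. Two upper bounds on the same quantity are perfectly compatible; nothing forces a contradiction. What is needed is a \emph{lower} bound on the number of such incidences, and that lower bound has to come from the assumption that no augmentation exists: one must argue that every vertex of each $V_j$ (for $j\in J\cup\{1\}$) is genuinely blocked by $S$ in a way that forces at least one edge into $S$ per vertex. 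That step is exactly where the tree bookkeeping you flag in your last paragraph becomes essential, and it is not separable from the counting---the lower bound only holds once the growth process and the termination condition are set up correctly (and in fact the clean version of the argument tracks edges rather than just vertices in $S$). As written, the proposal identifies the right proof architecture but leaves the decisive inequality unestablished.
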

We shall use Theorem \ref{thm:haxell-maxdeg} in our proof of Theorem \ref{thm:main-fraction}. It is worth noting that the following extension of Theorem \ref{thm:haxell-maxdeg} serves as another approximation towards Conjecture \ref{cor:strong} when $\Delta=2$.

\begin{theorem}\label{thm:main-2isr}
  If $H$ is a graph with $\Delta(H)\leq 2$ and $V_1, \ldots, V_n$
  are disjoint subsets of $V(H)$ with each $\sizeof{V_i} = 4$,
  then $(V_1, \ldots, V_n)$ has two disjoint ISRs.
\end{theorem}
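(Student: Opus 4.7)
The plan is to proceed in two stages. First, apply Theorem~\ref{thm:haxell-maxdeg} to the family $(V_1,\ldots,V_n)$ (valid since each $\sizeof{V_i}=4\geq 2\Delta(H)$) to obtain one ISR $R_1=\{r_1,\ldots,r_n\}$ with $r_i\in V_i$. Second, find an ISR $R_2$ in the residual partition $(V_1',\ldots,V_n')$, where $V_i':=V_i\setminus\{r_i\}$ has size $3$, possibly after adjusting $R_1$ via local swap operations so that such an $R_2$ exists.

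The principal obstacle is the second stage: Theorem~\ref{thm:haxell-maxdeg} is unavailable a second time, since it requires $\sizeof{V_i}\geq 4$ when $\Delta\leq 2$, whereas the residuals have size only $3$. Moreover, the Fleischner--Stiebitz counterexamples (the very reason Theorem~\ref{thm:haxell-maxdeg} is tight at $4$) show that this bound cannot be weakened in general. Hence the difficulty is real, and the choice of $R_1$ must be coordinated with existence of an ISR in the residual.

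To overcome this, the plan is a Haxell-style alternating-path (``swap'') argument. If the chosen $R_1$ leaves the residual $(V_i')$ without an ISR, then by a Hall-type deficiency analysis in $H-R_1$ there is some ``obstructing'' subfamily of residuals whose joint independent-neighborhood is too small. I would extract from this obstruction an index $i$ and a replacement $r_i^*\in V_i\setminus\{r_i\}$ such that $R_1^*:=(R_1\setminus\{r_i\})\cup\{r_i^*\}$ is still an ISR of the original family, but the new residual $(V_j\setminus\{r_j^*\})_{j}$ has strictly less deficiency (measured by, e.g., the size of a maximum Hall-violating subcollection, or an energy-style functional as in Haxell's own proof). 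Iterating the swap terminates because of this monovariant, and the terminal configuration furnishes the desired second ISR $R_2$.

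The main challenge is step~(c) of that plan: showing that whenever the current residual admits no ISR, a valid swap of the above type always exists, and that it strictly decreases the monovariant. This relies crucially on the constraint $\Delta(H)\leq 2$, which forces $H$ (and $H-R_1$) to be a disjoint union of paths and cycles, severely restricting the possible obstruction configurations. I expect the bulk of the proof to be a case analysis according to whether the bad components are even cycles, odd cycles, or paths, together with the various possibilities for how the sets $V_i$ intersect them; the fact that we have one ``extra'' vertex per $V_i$ beyond the Haxell threshold should provide exactly enough slack to carry out each swap.
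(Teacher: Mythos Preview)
Your approach differs substantially from the paper's. The paper does not first extract one ISR and then seek a second in the residual; it constructs both simultaneously via a vertex bipartition. After reducing to the case where $V_1,\ldots,V_n$ partition $V(H)$, the paper colors the edges of $H$ red, lays a $4$-cycle (green) and a $K_4$ (blue) on each $V_i$, and applies a minor variant of Lemma~2.6 of Haxell--Szab\'o--Tardos to split $V(H)$ into equal halves $W_1,W_2$ with $\Delta(G_1[W_1])\leq 1$ in red and $\Delta(G_2[W_2])\leq 1$ in green. A short counting argument then forces $W_1$ to meet each $V_i$ in exactly two vertices and to have blue-degree at most $1$ as well; thus $G[W_1]$ is a union of a red matching and a blue matching, hence bipartite, and its two sides are the two disjoint ISRs. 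The whole argument is short and elementary once the HST lemma is available.

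Your plan, by contrast, rests entirely on step~(c), which you correctly flag as the crux but leave open. The issue is not merely that the case analysis might be tedious: you have not specified a concrete monovariant, nor given any argument that a valid swap exists whenever the residual lacks an ISR. Haxell-type augmenting arguments of this flavor (for instance the algorithmic proofs of Theorem~\ref{thm:haxell-maxdeg}) require cascading chains of swaps governed by a carefully engineered potential, and proving that the potential strictly decreases is exactly the hard part. ``Hall-type deficiency'' is not by itself such a potential, since a single swap can shrink one obstructing subfamily while creating another of equal or larger size elsewhere. As written, the proposal is a reasonable heuristic but not yet a proof sketch: the key lemma that would make it work is precisely the thing that has not been stated, let alone proved.
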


When $\Delta(H)\geq 3$, Theorem \ref{thm:main-2isr} can de deduced from the work of Aharoni, Berger, and Spr\"{u}ssel \cite{ABS} as a consequence of a more general result about matroids. Haxell observed (personal communication)
that the $\Delta(H) = 2$ case follows quickly from a strengthened
version of Theorem~\ref{thm:haxell-maxdeg}, which uses topological tools. It is also possible to give an elementary algorithmic proof of Theorem \ref{thm:main-2isr}, using a lemma of  Haxell, Szab\'o, and Tardos (Lemma~2.6 of~\cite{HST}), and we include this as an appendix to the present paper.

The rest of the paper is organized as follows. In
Section~\ref{sec:long-odd} we show how to $4$-color a large fraction
of the vertices when $H$ has few long odd cycles. In
Section~\ref{sec:isr-lems} we prove two lemmas about ISRs that are
needed for Section~\ref{sec:few-tris}, in which we show how to
$4$-color a large fraction of the vertices when $H$ has few
triangles. In Section~\ref{sec:final} we combine results from
Sections~\ref{sec:long-odd} and ~\ref{sec:few-tris} to establish both Theorem~\ref{thm:main-cycles} and Theorem~\ref{thm:main-fraction}.

\section{Graphs with few long odd cycles}\label{sec:long-odd}

Our goal in this section is to prove the following theorem.
\begin{theorem}\label{thm:longodd-910}
  Let $H$ be a graph with $\Delta(H) \leq 2$, and let $G$
  be a graph obtained from $H$ by gluing in vertex-disjoint
  copies of $K_4$. Let $\cee$ be the set of odd cycles in $H$
  with length exceeding $3$, and let $V(\cee)$ be the set
    of vertices contained in these cycles. Then there is a set of vertices $Z \subset V(\cee)$
  with $\sizeof{Z} \leq \sizeof{\cee}/2$ such that $G-Z$
  is $4$-colorable and $Z$ contains at most one vertex
  from each cycle of $\cee$. It also holds that $\sizeof{V(G) - Z} \geq (9/10)\sizeof{V(G)}$.
\end{theorem}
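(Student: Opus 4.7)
The plan is to use a color-pair splitting strategy. Set $A = \{1,2\}$ and $B = \{3,4\}$. A proper $4$-coloring of $G$ in which each glued-in $K_4$ receives all four colors corresponds to a bipartition $V(G) = V_A \sqcup V_B$ with each $K_4$ split $2$-$2$, together with proper $2$-colorings of the induced subgraphs $G[V_A]$ using $A$ and $G[V_B]$ using $B$. Since $\Delta(H) \leq 2$ and each $V_A$-vertex is adjacent within $V_A$ to at most one other $K_4$-vertex, the graph $G[V_A]$ has maximum degree at most $3$, so its $2$-colorability reduces to bipartiteness. Thus the task reduces to choosing a $2$-$2$ split of each $K_4$ (and an assignment for vertices not in any $K_4$) so that neither $G[V_A]$ nor $G[V_B]$ contains an odd cycle. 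Such an odd cycle would have to be built from $H$-edges together with within-$K_4$ edges.

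Triangles are the most innocuous source of potential odd cycles: a triangle sharing all three vertices with a single $K_4$ is automatically ``split'' across $V_A$ and $V_B$ by any $2$-$2$ split of that $K_4$. For triangles that span several $K_4$'s, a short local case analysis shows that the $2$-$2$ splits can be coordinated so no triangle becomes a monochromatic obstruction. The genuine difficulty lies with the long odd cycles in $\cee$.

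For each $C \in \cee$, removing a single vertex of $C$ (and adding it to $Z$) turns $C$ into a path in $H - Z$ and turns the $K_4$ containing that vertex into a $K_3$, so $C$ is no longer capable of producing an odd cycle in $G[V_A]$ or $G[V_B]$. The key step is then a pairing strategy: I would set up an auxiliary ``conflict'' structure on $\cee$ whose edges record when two long odd cycles force incompatible $2$-$2$ splits of a shared $K_4$, or otherwise jointly produce a mixed $H$-plus-$K_4$ odd cycle that no split choice can avoid. The heart of the argument would be a conflict-resolution lemma stating that any such conflict between a pair of cycles in $\cee$ can be broken by deleting a single strategically chosen vertex from one of the two cycles. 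A matching/pairing argument on the conflict structure then yields $\sizeof{Z} \leq \sizeof{\cee}/2$ with at most one deletion per cycle.

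The main obstacle is precisely this conflict-resolution lemma: it requires enumerating the ways two long odd cycles can interact through one or more shared $K_4$'s (how many vertices of each cycle sit in the shared $K_4$, how the forced $V_A / V_B$ placements propagate along each cycle, and when $M_A$- or $M_B$-type chord edges close up into unavoidable odd cycles) and showing that a single well-chosen vertex deletion always cuts every such obstruction simultaneously. Once this is in place, the vertex-fraction bound is a routine calculation: each $C \in \cee$ has length at least $5$, so $\sizeof{V(\cee)} \geq 5\sizeof{\cee}$, giving $\sizeof{Z} \leq \sizeof{\cee}/2 \leq \sizeof{V(\cee)}/10 \leq \sizeof{V(G)}/10$, and hence $\sizeof{V(G) - Z} \geq (9/10)\sizeof{V(G)}$.
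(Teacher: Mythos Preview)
Your high-level framing---split $V(G)$ into $V_A \sqcup V_B$ with each $K_4$ cut $2$-$2$, then $2$-color each side---is exactly the shape of the paper's argument. But the execution you outline has a genuine gap: the ``conflict-resolution lemma'' you describe is not proved, and there is no evident reason the conflicts should organize themselves into a pairable structure on $\cee$. A single long odd cycle can interact with many $K_4$'s and many other cycles simultaneously; there is no a priori reason why one deletion should resolve a pairwise conflict, nor why conflicts are pairwise at all rather than global. You also defer the triangle case to ``a short local case analysis,'' but triangles spread across several $K_4$'s are not obviously benign, and nothing in your setup explains how the $2$-$2$ splits of different $K_4$'s are to be coordinated in the first place. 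Finally, bipartiteness of $G[V_A]$ and $G[V_B]$ is not just about killing the $H$-odd-cycles: mixed odd cycles built from $H$-edges and intra-$K_4$ edges must also be excluded, and your conflict structure does not address these.

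The paper's mechanism is quite different and avoids all of this case analysis. It first reduces to the situation where the $K_4$'s partition $V(H)$ and $H$ is $2$-regular, then fixes a blue perfect matching $J$ and, for each transversal $T$ of the odd cycles of $H$, a red near-perfect matching $M(T)$ missing exactly $T$. The bipartite graph $B(T)=M(T)\cup J$ is the object that determines the $V_A/V_B$ split: a proper $2$-coloring of $B(T)$ automatically gives each side maximum red degree $1$ and maximum blue degree $1$ (hence bipartite) at every vertex except possibly those $t\in T$ with $\phi(t)=\phi(t^+)$. The key technical work is an optimization of $T$ (first minimizing total path length in $B(T)$, then a secondary ``cost'') which forces $t$ and $t^+$ to lie in different components of $B(T^*)$ for every $t\in T^*$. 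A random $2$-coloring of the components then makes each $t\in T^*\cap V(\cee)$ unhappy with probability exactly $1/2$ independently, so linearity of expectation yields $\sizeof{Z}\le\sizeof{\cee}/2$. Triangles are handled for free by this same structure: if $t$ lies in a triangle $txy$, then $xy\in M(T^*)$, so $x$ and $y$ get opposite colors automatically. The $\sizeof{\cee}/2$ bound thus comes from a probabilistic averaging, not from any pairing of conflicts.
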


We will need the following lemma about equitable coloring in
our proof of Theorem \ref{thm:longodd-910}.
\begin{lemma}[Hajnal--Szemer\'edi~\cite{equitable}]\label{lem:equitable}
  If $G$ is a graph and $k > \Delta(G)$, then $G$ has a proper
  $k$-coloring with color classes $A_1, \ldots, A_k$ such that
  $\sizeof{A_i - A_j} \leq 1$ for all $i,j$.
\end{lemma}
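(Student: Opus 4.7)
The plan is to produce a set $Z \subset V(\mathcal{C})$ satisfying the stated conditions together with a proper $4$-coloring of $G - Z$. First observe that the density bound $|V(G) - Z| \geq (9/10)|V(G)|$ is in fact a consequence of the cycle bound $|Z| \leq |\mathcal{C}|/2$: since each cycle $C \in \mathcal{C}$ has odd length at least $5$, we have $|V(G)| \geq |V(\mathcal{C})| \geq 5|\mathcal{C}|$, so $|Z| \leq |\mathcal{C}|/2 \leq |V(\mathcal{C})|/10 \leq |V(G)|/10$. Thus the essential task is to find $Z$ with $|Z| \leq |\mathcal{C}|/2$, containing at most one vertex per cycle of $\mathcal{C}$, such that $G - Z$ is $4$-colorable.

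The construction would exploit the fact that removing a single vertex from a long odd cycle $C$ turns it into a path of even length, which is bipartite and hence much easier to slot into a $4$-coloring. Intuitively this ``fixes'' the parity obstruction posed by $C$, freeing us to use only two colors on the resulting path while reserving the other two for the $K_4$'s that interact with it. The challenge is that we are not permitted to remove a vertex from every cycle, only from at most half of them; the other cycles in $\mathcal{C}$ must be properly $4$-colored in $G$ without any removal.

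To achieve the $1/2$ bound, I would apply Lemma~\ref{lem:equitable} to an auxiliary graph $F$ whose vertices are the cycles of $\mathcal{C}$ and whose edges encode structural conflicts between cycles (for instance, two cycles sharing a common $K_4$, or being linked by a short $H$-path that transmits a parity constraint). The goal is to design $F$ so that $\Delta(F)$ is small enough that Lemma~\ref{lem:equitable} applies with $k = 2$, yielding two balanced independent classes $\mathcal{C}_0, \mathcal{C}_1$ in $F$ with $||\mathcal{C}_0| - |\mathcal{C}_1|| \leq 1$. Taking $Z$ to consist of one carefully chosen vertex from each cycle in the smaller class then gives $|Z| \leq |\mathcal{C}|/2$. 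The fact that the chosen cycles are independent in $F$ would ensure that the removals do not interfere with one another, allowing the remaining cycles (in the other class) to be consistently $4$-colored by processing each cycle-plus-$K_4$ block locally and gluing.

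The main obstacle I anticipate lies in designing the auxiliary graph $F$ with a small enough maximum degree and a strong enough edge structure that (i) the equitable split of $F$ really does isolate the cycles needing removal from the cycles that need to be colored intact, and (ii) the residual graph $G - Z$ genuinely admits a proper $4$-coloring. A single $K_4$ can intersect several long odd cycles, forcing its bijection to $\{1,2,3,4\}$ to reconcile parity constraints from each of them; accounting for these ``multi-cycle'' $K_4$'s in the definition of $F$ is delicate. Once $Z$ is fixed, the gluing step would color each $K_4$ by a bijection that is compatible with the broken cycles (now paths) and the other components of $H - Z$, perhaps using the ISR-style results (Theorems~\ref{thm:haxell-maxdeg} and~\ref{thm:main-2isr}) on the cycle-free parts. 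The equitable structure from Lemma~\ref{lem:equitable} is the key ingredient making the competing local constraints globally compatible.
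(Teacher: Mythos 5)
Your proposal does not address the stated lemma at all. The statement to be proved is the Hajnal--Szemer\'edi equitable coloring theorem: every graph $G$ with $k > \Delta(G)$ has a proper $k$-coloring in which any two color classes differ in size by at most $1$. This is a deep classical result (the original proof of Hajnal and Szemer\'edi is substantial, and even the later simplified proofs of Kierstead--Kostochka and others are nontrivial), and the paper does not prove it --- it simply cites it from the literature and invokes it as a tool (in the proof of Theorem~\ref{thm:longodd-910}, to build the reduction to the case where the added $K_4$'s partition $V(H)$).

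What you have written instead is a speculative outline of a proof of Theorem~\ref{thm:longodd-910}, the theorem in which Lemma~\ref{lem:equitable} is \emph{used}. In fact your sketch explicitly proposes to apply Lemma~\ref{lem:equitable} as an ingredient, which cannot be a proof of Lemma~\ref{lem:equitable} itself. Beyond the mismatch of target, the sketch also bears little resemblance to the paper's actual argument for Theorem~\ref{thm:longodd-910}: the paper does not build an auxiliary conflict graph $F$ on the cycles of $\mathcal{C}$ and does not apply equitable coloring with $k=2$ to such a graph. Instead it uses Hajnal--Szemer\'edi only once, at the reduction step, to manufacture independent $4$-sets covering the vertices not already covered by a $K_4$; the core of the argument is then a red/blue edge-coloring of $G$, the choice of an ``optimal'' transversal $T^*$ of all odd cycles of $H$ with respect to a carefully designed potential function, and a probabilistic two-coloring of the bipartite graph $B(T^*)$ to bound the number of ``unhappy'' transversal vertices by $|\mathcal{C}|/2$. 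Your auxiliary-graph strategy, as described, has no mechanism ensuring that the cycles left intact are actually simultaneously $4$-colorable --- independence in $F$ does not by itself supply a coloring of the residual graph --- so even if it were aimed at the right theorem it would have a substantive gap at exactly the point you flag as ``delicate.''
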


\begin{proof}[Proof of Theorem~\ref{thm:longodd-910}]
  We start by showing that it suffices to prove the result for
  problem instances that satisfy the following additional assumptions:
  \begin{enumerate}[(1)]
  \item The vertices of each added copy of $K_4$ form an independent set in $H$,
  \item $H$ is $2$-regular, and
  \item The added copies of $K_4$ partition $V(H)$.
  \end{enumerate}
  Let $H$ be any graph with $\Delta(H) \leq 2$ and let
  $Y_1, \ldots, Y_q$ be the vertex sets of added copies of $K_4$. We
  will modify $H$ and $Y_1, \ldots, Y_q$ to guarantee each of the
  properties (1)--(3) in turn, taking care not to invalidate earlier
  properties when establishing later ones, and taking care not to
  increase the size of $\cee$. At each step, we will observe that a
  $4$-coloring of the modified graph implies $4$-colorability of the
  original $G$.
  \begin{enumerate}[(1)]
  \item Let $H^1$ consist of the graph $H$
    where all edges within each $Y_j$ have been deleted, and let $G^1$ consist
    of $H^1$ with copies of $K_4$ glued in on the vertex sets $Y_1, \ldots, Y_j$.
    The only edges deleted in passing from $H$ to $H^1$ are restored when we glue
    in the $K_4$'s, so $G^1 = G$. Clearly, deleting edges cannot increase the size of $\cee$.
  \item If $H^1$ is 2-regular then let $H^2=H^1$. \gp{Otherwise, we
      form $H_2$ from $H_1$ by adding new vertices and edges as
      follows. Let $P_1, \ldots, P_t$ be the path components of
      $H^1$. For each path component $P_i$, if $\sizeof{V(P_i)}$ is
      odd then we add one new vertex $v_i$ adjacent to the endpoints
      of $P_i$, while if $\sizeof{V(P_i)}$ is even then we add two new
      adjacent vertices $v_i, w_i$ with $v_i$ adjacent to one endpoint
      of $P_i$ and $w_i$ adjacent to the other.} Let $G^2$ consist of
    $H^2$ with copies of $K_4$ glued in on the sets
    $Y_1, \ldots, Y_q$. We see that $G^2$ contains $G^1$ as an induced
    subgraph (since no new edges are added within $V(H^1)$), hence
    $4$-colorability of $G^2$ implies $4$-colorability of
    $G^1$. Furthermore, since all $Y_j \subseteq V(H_1)$, we see that
    Property~(1) still holds.  Since all new cycles created in this
    manner are even cycles, $\sizeof{\cee}$ has not increased in this
    step.
  \item Let $J$ be the subgraph of $H^2$
    induced by the vertices not covered by $Y_1, \ldots, Y_q$. Let $J'$
    be the disjoint union of $J$ and $t$ copies of  $K_3$, where $t$ is chosen so that $J'$
    has at least $12$ vertices and $\sizeof{V(J')}$ is divisible by $4$.
    (Since $3$ and $4$ are coprime, such a $t$ can always be found.)

    Let $k = \sizeof{V(J')}/4$. By our choice of $t$, we see that $k$
    is an integer with $k \geq 3 > \Delta(J')$. Hence, by Lemma~\ref{lem:equitable},
    we see that $J'$ has a $k$-coloring with color classes $A_1, \ldots, A_k$
    such that $\sizeof{A_i - A_j} \leq 1$. Since $\sizeof{V(J')}=4k$,
    this implies that all $\sizeof{A_i} = \sizeof{V(J')}/k = 4$. These color classes
    are independent sets that we will be able to glue new copies of $K_4$ onto.

    Let $H^3$ be the disjoint union of $H^2$ and $t$ copies of $K_3$
    (with the latter having the same vertex set as those we added in
    passing from $J$ to $J'$). Let $Y'_1, \ldots, Y'_{q'}$ consist of
    the original sets $Y_1, \ldots, Y_q$ together with the color
    classes $A_1, \ldots, A_k$ from the coloring of $J'$.  Let
    $G^3 = H^3$ with copies of $K_4$ glued in on the sets
    $Y'_1, \ldots, Y'_{q'}$.  As $G^3$ contains $G^2$ as an induced
    subgraph, $4$-colorability of $G^3$ implies $4$-colorability of
    $G^2$ and hence $G$.

    Now $Y'_1, \ldots, Y'_{q'}$ partition $V(H^3)$, and by our
    construction, each $Y'_j$ is an independent set in $H^3$. In passing
    from $H^2$ to $H^3$ we have maintained
    $2$-regularity, so Properties (1) and (2) still hold. As we have
    only added triangles in passing from $H^2$ to $H^3$, we also see
    that we have not increased $\sizeof{\cee}$.
  \end{enumerate}

  For the remainder of this proof, we will assume that Properties
  (1)--(3) hold.  We will view the edges of $G$ (and its subgraphs) as
  being colored red and blue: all edges coming from the 2-regular graph
  $H$ will be colored red and all edges of the added copies
  of $K_4$ will be colored blue. (Obviously this is not a proper
  edge-coloring, as in $G$ every vertex is incident to two red edges
  and three blue edges. However, Property~(1) does ensure that there are no red and blue edges in parallel).

  Let $\cee_0$ be the set of all odd cycles in $H$ (including triangles),
  and let $V(\cee_0)$ be the set of vertices in these odd cycles. Observe
  that every component of $H- V(\cee_0)$ is an even cycle, hence $H$ has
  a matching (of red edges) that saturates $V(H) - V(\cee_0)$. Fix such
  a matching $M_0$.

  \gp{A \emph{transversal} of $\cee_0$ is a vertex set
    $T \subset V(G)$ containing exactly one vertex from each cycle in
    $\cee_0$. We will use the word \emph{transversal} as shorthand for
    ``transversal of $\cee_0$''.}  \gp{Given a transversal $T$ and a
    vertex $t\in T$,} we write $C(t)$ for the cycle of $\cee_0$
  containing $t$. Observe that for any transversal $T$, there is a
  unique matching \gp{of $H$} that extends $M_0$ and saturates $V(G) - T$. Let
  $M(T)$ denote this unique matching. (While the base matching $M_0$
  is arbitrary, we use the same choice of $M_0$ for all transversals
  $T$ when defining $M(T)$.)

  Let $J$ be an arbitrary perfect matching of the blue edges in $G$,
  and for any transversal $T$, let $B(T) = M(T) \cup J$, considered as
  a subgraph of $G$. (As before, while the choice of $J$ is arbitrary, we use
  the same $J$ for all $T$.) Inheriting the edge coloring from $G$, we observe
  that\gp{, within $B(T)$,} every vertex $v$ is incident to exactly one blue edge
  and either exactly one red edge (if $v \notin T$) or to zero red
  edges (if $v \in T$). In particular, $B(T)$ is bipartite, and its
  components consist of  even cycles together with
  $\sizeof{\cee_0}/2$ paths whose endpoints are vertices in $T$.

  Now, among all the possible transversals $T$, we will choose an ``optimal'' transversal
  $T^*$. Our selection proceeds in two stages. First, among all transversals $T$, choose
  $T_1$ to minimize the sum of the lengths of all path components in $B(T_1)$.
  Call any transversal achieving this minimum a \emph{semi-optimal} transversal.

  \begin{claim}\label{claim:notQ}
    Let $T_1$ be a semi-optimal transversal, let $t_1 \in T_1$, and let $Q$
    be the component of $B(T_1)$ containing $t_1$. Then there is at least one
    vertex in the odd $H$-cycle $C(t_1)$ that is not contained in $Q$.
  \end{claim}
  \begin{proofclaim}
    Let $C_1 = C(t_1)$, and suppose to the contrary that every vertex of $C_1$ lies in $Q$.
    Observe that $Q$ is a path with $t_1$ as one endpoint and another
    vertex of $T_1$ as the other endpoint. Let $t_2$ be the other endpoint of $Q$,
    so that $Q$ is a $(t_1,t_2)$-path.

    Let $t'$ be the last vertex of $C_1$ along the $(t_1,t_2)$-path $Q$.
    Observe that since $Q$ contains every vertex of $C_1$, we have
    $t' \neq t_1$. Since $t'$ is the last vertex of $C_1$ along this path,
    the $(t',t_2)$-subpath does not contain any edges from $C_1$.

    Let $T'$ be the transversal obtained from $T_1$ by replacing $t_1$
    with $t'$. We claim that the path-components of $B(T')$ have a
    smaller sum of lengths than the path-components of $B(T_1)$,
    contradicting the choice of $T_1$.

    First observe that the only edges that lie in $B(T')$ but not in
    $B(T_1)$, or vice versa, are edges from the cycle $C_1$, since
    this is the only cycle whose transversal-representative has
    changed. Since (by hypothesis) every vertex of $C_1$ lies in $Q$, for every vertex $v$ not in $Q$ the set of edges incident to $v$ in $B(T_1)$ is identical to the set of edges incident to $v$ in $B(T')$. In particular, for every
    $t \in T - \{t_1, t_2\}$, the component containing $t$ in $B(T_1)$ is identical to the component containing $t$ in $B(T')$.

   Since the $(t',t_2)$-subpath
    of $Q$ did not use any edges of $C_1$, we see that it is still present
    in $B(T')$. However, the length of the $(t',t_2)$-path component in $B(T')$ is strictly
    less than the length of the $(t_1,t_2)$-path component in $B(T_1)$, with all other
    path components having the same length in both graphs. This contradicts the
    choice of $T_1$ and completes the proof of the claim.
  \end{proofclaim}
  Next, we refine our choice among the semi-optimal transversals.  To
  define our optimality criterion, first fix a cyclic orientation of
  each cycle in $\cee_0$.  For each $v \in V(\cee_0)$, let $v^+$ be
  the successor of $v$ in this cyclic orientation, and for $v,w$ in
  the same cycle, let $d^+(v,w)$ be the ``directed $H$-distance'' from $v$
  to $w$ along the directed $H$-edges.

  \begin{claim}\label{claim:nobb}
    Let $T_1$ be a semi-optimal transversal, let $t_1 \in T_1$, and
    let $Q$ be the component of $B(T_1)$ containing $t_1$. If $t_1^+ \in Q$,
    then the unique $(t_1,t_1^+)$-path in $Q$ starts with a blue edge
    and ends with a red edge.
  \end{claim}
  \begin{proofclaim}
    Since $t_1$ has no incident red edges in $B(T_1)$, the only
    other possibility is that the $(t_1,t_1^+)$-path starts and ends
    with a blue edge, as shown in Figure~\ref{fig:t1plus}(a).  As
    $t_1^+$ is covered by $M(T_1)$ and therefore has an incident red
    edge in $B(T_1)$, we see that the vertex following $t_1^+$ in
    $Q$ is its other $H$-neighbor, namely $t_1^{++}$.  We
      know that the other endpoint of $Q$ is another vertex of $T_1$;
    let $t_2 \in T_1$ be the other endpoint of $Q$.
    \begin{figure}
      \centering
      \begin{tikzpicture}
        \begin{scope}[yshift=0cm]
          \node at (1cm, .5cm) {(a)};
          \foreach \i in {1,...,5}
          {
            \apoint{} (a\i) at (2*\i cm, 0cm) {};
            \apoint{} (b\i) at (1cm + 2*\i cm, 0cm) {};
          }
          \bpoint{t_1} at (a1) {};
          \bpoint{t_1^{++}} at (a4) {};
          \bpoint{t_1^{+}} at (b3) {};
          \bpoint{t_2} at (b5) {};
          \begin{edges}
            \foreach \i in {1,...,5}
            {
              \draw[blueedge] (a\i) -- (b\i);
            }
            \foreach \i in {1,...,4}
            {
              \pgfmathsetmacro\j{1+\i};
              \draw[rededge] (b\i) -- (a\j);
            }
            \draw[reddots] (a1) .. controls ++(45:1.5cm) and ++(135:1.5cm) .. (b3);
          \end{edges}
        \end{scope}
        \begin{scope}[yshift=-2cm]
          \node at (1cm, .5cm) {(b)};
          \foreach \i in {1,...,5}
          {
            \apoint{} (a\i) at (2*\i cm, 0cm) {};
            \apoint{} (b\i) at (1cm + 2*\i cm, 0cm) {};
          }
          \bpoint{t_1} at (a1) {};
          \bpoint{t_1^+} at (a4) {};
          \bpoint{t_1^{++}} at (b3) {};
          \bpoint{t_2} at (b5) {};
          \begin{edges}
            \foreach \i in {1,...,5}
            {
              \draw[blueedge] (a\i) -- (b\i);
            }
            \foreach \i in {1,...,4}
            {
              \pgfmathsetmacro\j{1+\i};
              \draw[rededge] (b\i) -- (a\j);
            }
            \draw[reddots] (a1) .. controls ++(45:1.5cm) and ++(135:1.5cm) .. (a4);
          \end{edges}
        \end{scope}
      \end{tikzpicture}
      \caption{Two possible configurations of the component $Q$ of
        $B(T^*)$ containing $t_1$. Wavy lines denote blue
        edges in $B(T^*)$; solid lines denote red edges in $B(T^*)$;
        dashed edges are red edges contained in $H - M(T)$.}
      \label{fig:t1plus}
    \end{figure}
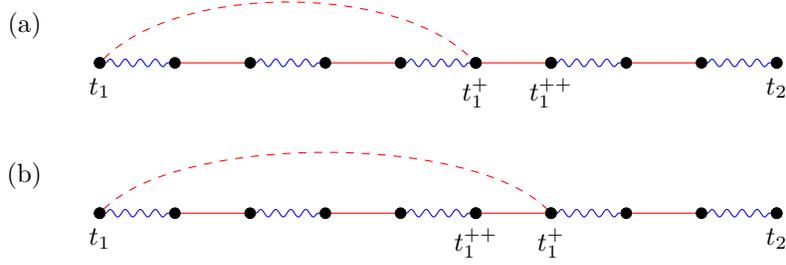

    Let $t' = t_1^{++}$ and let $T'$ be the transversal obtained from $T_1$ by replacing $t_1$ with $t'$.
    As shown in Figure~\ref{fig:rotate-matching} (see also Figure \ref{fig:t1plus}(a)), the only effect of this replacement
    on the matching $M(T_1)$ is to replace the matching-edge $t_1^+t_1^{++}$ with the matching-edge $t_1t_1^+$.
    This transforms the component $Q$ into an (even) cycle containing $t_1$ and $t_1^+$ and a shorter $(t_1^{++},t_2)$-path
    and has no effect on the other components of $B(T_1)$. Since the sum of the lengths of the path
    components in $B(T')$ is shorter than the sum for $B(T_1)$, this contradicts the choice of $T_1$
    as semi-optimal.
  \end{proofclaim}

  For a transversal $T$ and a vertex $t \in T$, let $Q$ denote the
  component of $B(T)$ containing $t$ (note that $Q$ is a path). Say
  $t \in T$ is \emph{bad} if $t^+ \in Q$. The \emph{cost} of a bad
  vertex $t \in T$ is the minimum directed $H$-distance $d^+(t,v)$ for
  $v \in V(C(t)) - V(Q)$, and the cost of a non-bad vertex $t \in T$ is $0$.
  (Note that for semioptimal $T$, the set $C(t) - Q$ is nonempty \gp{by Claim~\ref{claim:notQ}},
  hence the cost of a bad vertex is always finite.)

  Define the cost of a semi-optimal transversal $T$ to be the sum of the
  costs of the vertices in $T$. Among all semi-optimal transversals,
  choose $T^*$ to have minimum cost.

  \begin{claim}
    $T^*$ has no bad vertices.
  \end{claim}
  \begin{proofclaim}
    Suppose to the contrary that $t_1 \in T^*$ is a bad vertex, and
    let $Q$ be the component of $B(T^*)$ containing $t_1$.
    We know that $Q$ is a path whose other endpoint is another
    vertex of $T^*$; let $t_2 \in T^*$ be the other endpoint of $Q$.
    Let $v$ be the vertex of $C(t_1)-Q$ minimizing $d^+(t_1, v)$,
    so that $d^+(t_1, v)$ is the cost of the bad vertex $t_1$.

    Since $t_1$ is bad, we have $t_1^+ \in Q$. By Claim~\ref{claim:nobb}, the unique
    $(t_1, t_1^+)$-path in $Q$ starts with a blue edge and ends with
    a red edge, as shown in Figure~\ref{fig:t1plus}(b). In particular, the vertex preceding $t_1^+$
    in this path is the other $H$-neighbor of $t_1^+$, namely $t_1^{++}$. Thus, $d^+(t_1, v) \geq 3$.

    Let $t' = t_1^{++}$ and let $T'$ be the transversal obtained from $T^*$ by replacing $t_1$ with $t'$.
    As before, the only effect of this replacement
    on the matching $M(T^*)$ is to replace the matching-edge $t_1^+t_1^{++}$ with the matching-edge $t_1t_1^+$;
    see Figure~\ref{fig:rotate-matching}. In particular, this replacement does not alter the length of the
    path-component containing $t$ for any $t \in T^* - \{t_1,
    t_2\}$. Furthermore, after removing the edge $t^+_1t'$ and
    adding the edge $t^+_1t_1$, we see that $B(T')$ has a
    $(t',t_2)$-path using exactly the vertices of $Q$, obtained by
    starting at $t'$, traversing $Q$ backwards until $t_1$, taking the
    new edge $t_1t^+_1$, and then completing the rest of the path from
    $t^+_1$ to $t_2$ (see Figure \ref{fig:t1plus}(b)). Hence, the sum of the lengths of the
    path-components in $B(T')$ is the same as the sum of the lengths
    in $B(T^*)$, that is, $T'$ is also semi-optimal.
    \begin{figure}
      \centering
      \begin{tikzpicture}[scale=1.5]
        \begin{scope}[xshift=-2cm]
        \foreach \i in {1,...,7}
        {
          \pgfmathsetmacro\tht{360/7*\i + 90 - 360/7};
          \apoint{} (a\i) at (\tht : 1cm) {};
        }
        \apoint{t_1} at (a1) {};
        \rpoint{t_1^+} at (a7) {};
        \rpoint{t_1^{++}} at (a6) {};
        \draw[->, medge] (a3) -- (a2); \draw[->, medge] (a5) -- (a4); \draw[->, medge] (a7) -- (a6);
        \draw[->, rededge] (a1) -- (a7); \draw[->, rededge] (a2) -- (a1); \draw[->, rededge] (a4) -- (a3); \draw[->, rededge] (a6) -- (a5);
        \makecvert{a1};
      \end{scope}
      \begin{scope}[xshift=2cm]
        \foreach \i in {1,...,7}
        {
          \pgfmathsetmacro\tht{360/7*\i + 90 - 360/7};
          \apoint{} (a\i) at (\tht : 1cm) {};
        }
        \apoint{t_1} at (a1) {};
        \rpoint{t_1^+} at (a7) {};
        \rpoint{t_1^{++}} at (a6) {};
        \draw[->, medge] (a3) -- (a2); \draw[->, medge] (a5) -- (a4); \draw[->, medge] (a1) -- (a7);
        \draw[->, rededge] (a7) -- (a6); \draw[->, rededge] (a2) -- (a1); \draw[->, rededge] (a4) -- (a3); \draw[->, rededge] (a6) -- (a5);
        \makecvert{a6};
      \end{scope}
      \node[anchor=west] at (0cm, 0cm) {$\Longrightarrow$};
      \end{tikzpicture}
      \caption{Effect of replacing $t_1$ with $t' = t_1^{++}$ on the
        matching $M(T')$. Thick edges are edges in $M(T^*)$ or
        $M(T')$; circled vertices are vertices in the
          transversal.}
      \label{fig:rotate-matching}
    \end{figure}
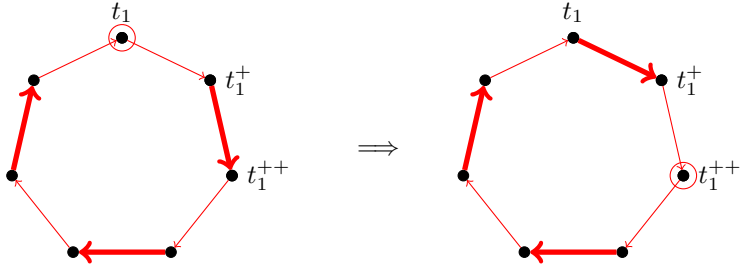

    Observe that since we replaced $t_1$ with $t' = t_1^{++}$ and
    since $d^+(t_1, v) \geq 3$, we have
    $d^+(t', v) \leq d^+(t_1, v) - 2$. Thus, the cost of $t'$ is
    strictly less than the cost of $t_1$ (regardless of whether $t'$
    is bad).  Furthermore, we have not altered any components of
    $B(T^*)$ except for the component $Q$, so every other vertex
    of $T'$ has the same cost it did in $T^*$. It follows that
    $T'$ is a semi-optimal transversal having lower cost than $T^*$,
    contradicting the choice of $T^*$.
  \end{proofclaim}
  Now we use the optimal transversal $T^*$ to produce the desired coloring.
  First we will randomly produce a $2$-coloring of $T^*$ using the colors
  black and white, then we will use the black-and-white coloring to $4$-color
  most of $G$.
  Each component of $B(T^*)$ is bipartite. Obtain a random
  black-and-white coloring of $B(T^*)$ by randomly choosing one of the
  two possible black-and-white colorings for each component
  independently and with equal probability. Let $\phi$ be the
  resulting coloring.
  \begin{claim}\label{claim:prob}
    For any vertex $t \in T^*$, the probability that $\phi(T)=\phi(t^+)$  is equal to $\tfrac{1}{2}$.
  \end{claim}
  \begin{proofclaim}
    By the previous claim, $t$ is not a bad vertex, so $t$ and $t^+$
    are in different components of $B(T^*)$. As the colorings on these
    components are chosen independently, \gp{the claim follows.}
  \end{proofclaim}
  Now we restrict our attention to the vertices of $T^*$ that lie in $\cee$, disregarding the vertices of $T^*$ that lie in triangles.
  Say that a vertex $t \in T^* \cap V(\cee)$ is \emph{unhappy} if $\phi(t) = \phi(t^+)$. By
  Claim \ref{claim:prob} and by linearity of expectation, the expected number of unhappy
  $T^*$-vertices in a random coloring is at most $\sizeof{T^* \cap V(\cee)}/2 = \sizeof{\cee}/2$. Hence,
  there is a coloring $\phi^*$ with at most $\sizeof{\cee}/2$ unhappy vertices
  in $T^* \cap V(\cee)$.

  Let $Z$ be the set of unhappy vertices for $\phi^*$. Let $W_1$ be the set of vertices colored black in
  $G-Z$, let $W_2$ be the set of vertices colored white in $G-Z$, and let
  $G_i$ be the induced subgraph $G[W_i]$ for $i\in\{1, 2\}$.

\begin{claim}\label{claim:redblue}
  Let $i \in \{1,2\}$. Every vertex of $G_i$ is incident, within $G_i$,
  to at most one red edge and to at most one blue edge.
  \end{claim}

\begin{proofclaim}
  First suppose that some $w \in V(G_i)$ were incident with two blue
  edges within $G_i$, say $wx$ and $wy$.  So, $w, x, y$ all received
  the same color under $\phi^*$. Since $B(T^*)$ is properly 2-colored
  by $\phi^*$, neither of $wx$ or $wy$ is in $B(T^*)$. Since $w, x, y$
  must all be part of the same (blue) $K_4$, and $B(T^*)$ contains a
  perfect (blue) matching of this $K_4$, we conclude that $xy$ is in
  $B(T^*)$. But this is a contradiction, since $x$ and $y$ receive the
  same color under $\phi^*$.

  \gp{Next, let some $w \in V(G_i)$ be given; we argue that $G$ is
    incident to at most one red edge within $G_i$. Let $wx$ and $wy$
    be the two red edges incident to $w$. If $w \notin T^*$, then one
    of these edges appears in $B(T^*)$, say $wx \in E(B(T^*))$.  Then
    $w$ and $x$ receive opposite colors in $B(T^*)$, so that
    $wx \notin E(G_i)$, establishing the claim that $w$ is incident to
    at most one red edge in $G_i$.}  However, if $w \in T^*$ we must
  be slightly more subtle.

  If $w \in T^* \cap V(\cee)$, then since $w \notin Z$, we see that
  $w$ is not unhappy under $\phi^*$. Hence,
  $\phi^*(w) \neq \phi^*(w^+)$, so the red edge $ww^+$ is not
  contained in $G_i$, \gp{again implying that $w$ is incident to at
    most one red edge in $G_i$.}

  Finally, if $w \in T^* - V(\cee)$, then $w$ lies in some
  triangle $wxy$ of $H$, so that $M(T^*)$ must contain the edge
  $xy$. Hence $x$ and $y$, the two neighbors of $w$ along red edges,
  receive opposite colors in the proper $2$-coloring of $B(T^*)$, \gp{implying
  that at most one of the red edges $wx, wy$ lies in $E(G_i)$.}
  \end{proofclaim}

 We are now able to deduce that $Z$ satisfies all the desired conclusions of Theorem \ref{thm:longodd-910}. By the definition of unhappy vertices, we know that $Z$ contains at most one
  vertex from each cycle in $\cee$. Our selection of $\phi^*$ (made possible by Claim \ref{claim:prob}) tells us that $\sizeof{Z} \leq \sizeof{\cee}/2$. Claim \ref{claim:redblue} tells us that each $G_i$ inherits a proper 2-edge-coloring in red and blue, and hence is bipartite. Since
  $V(G_1) \cup V(G_2) = V(G) - Z$, this implies that $\chi(G-Z) \leq 4$. Finally, since $|Z|\leq \tfrac{|\cee|}{2} \leq \tfrac{(|V(G)|/5)}{2}$, it follows that $|V(G)-Z|\geq \tfrac{9}{10} |V(G)|$.
\end{proof}

\section{Two lemmas about ISRs}\label{sec:isr-lems}

A \emph{total dominating set} in a graph $G$ is a set of vertices $X$
such that every vertex in $G$ is adjacent to a vertex in $X$. In
particular, every vertex of $X$ must also have a neighbor in $X$. The
\emph{total domination number} of $G$, written $\totdom(G)$, is the
size of a smallest total dominating set; if $G$ has isolated vertices,
then by convention we set $\totdom(G) = \infty$.

Given a graph $H$ and disjoint subsets
  $V_1, \ldots, V_n \subset V(H)$, for each $S \subset [n]$ we define a
  subgraph $H_S$ by taking the subgraph induced by the vertex set
  $\bigcup_{i \in S}V_i$ and deleting all edges in $G[V_i]$ for every $i\in S$.

  Using the above definitions, we can now state the following result
  of Haxell \cite{haxell-dom}.

\begin{theorem}[Haxell~\cite{haxell-dom}]\label{thm:haxell}
  Let $H$ be a graph and let $V_1, \ldots, V_n$ be disjoint subsets of
  $V(H)$. If, for all $S \subset [n]$, we have $\totdom(H_S) \geq 2\sizeof{S} - 1$, then $(V_1, \ldots, V_n)$ has an ISR.
\end{theorem}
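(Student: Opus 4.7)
The plan is a proof by contradiction: assume $(V_1, \ldots, V_n)$ admits no ISR, and produce $S^* \subseteq [n]$ with $\totdom(H_{S^*}) \leq 2\sizeof{S^*} - 2$, contradicting the hypothesis. Let $I = \{v_i : i \in S\}$ (with $v_i \in V_i$) be a partial ISR of maximum size. By assumption $S \subsetneq [n]$, so fix a ``deficient'' index $k \in [n] \setminus S$.

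The key construction is an alternating-tree exploration built in layers alternating between \emph{candidates} (potential new representatives) and \emph{blockers} (current elements of $I$ whose presence prevents a candidate from being added). Initialize the candidate layer $C_0 = V_k$. Because $I$ is maximum, every $u \in C_0$ has a neighbor in $I$; fix one such neighbor and call it the blocker $f(u)$, letting $B_0$ be the set of these blockers. Inductively, for each $v_i \in B_\ell$, inspect the class $V_i$: if some $u \in V_i$ has no neighbor in $(I \setminus \{v_i\}) \cup \bigcup_{m \leq \ell} C_m$, then a backward trace along the $f$-pointers yields a sequence of swaps that replaces $v_i$ by $u$ and cascades upward to enlarge $I$, contradicting maximality. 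Otherwise each $u \in V_i$ is blocked, either by a new element of $I$ or by an earlier candidate; enroll such $u$ in $C_{\ell+1}$ with $f(u)$ chosen among its blockers, and collect all new $I$-blockers into $B_{\ell+1}$. The process halts because the tree is contained in the finite set $\bigcup_i V_i$.

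Upon termination, let $S^* \subseteq S \cup \{k\}$ be the set of class indices appearing in the tree, and let $D$ consist of all blockers $\bigcup_\ell B_\ell$ together with one representative candidate from each maximal branch of the tree. Failure to augment says exactly that every vertex of $\bigcup_{i \in S^*} V_i$ has a neighbor in $D$, so $D$ dominates $H_{S^*}$; moreover each element of $D$ has a neighbor in $D$ via the $f$-pointers (blockers connect downward to their blocked candidates, candidates connect upward to their chosen blocker). A layer-by-layer count then gives $\sizeof{D} \leq 2\sizeof{S^*} - 2$: each class index beyond the root $k$ contributes at most one new candidate and one new blocker, while the root contributes only candidates and terminal layers contribute only blockers, saving two from the trivial bound $2\sizeof{S^*}$.

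The main obstacle will be rigorously verifying the augmentation step: one must show that a single unblocked candidate-vertex genuinely enables a globally consistent swap sequence along the $f$-pointers terminating in a larger partial ISR, which requires an induction on tree depth ensuring each proposed swap preserves the independence condition defining an ISR. A secondary difficulty is selecting the ``one representative candidate per branch'' for $D$ so that the \emph{total}-domination property (not merely ordinary domination) survives the pruning, since every element of $D$ must retain a neighbor inside $D$. I would expect to handle both issues together by a single induction organized around the tree's layered structure.
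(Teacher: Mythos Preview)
The paper does not supply a proof of this theorem; it is quoted as a result of Haxell and used as a black box (specifically, as the input to the proof of Lemma~\ref{lem:haxell-defic}). So there is no ``paper's own proof'' against which to compare your attempt.

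That said, your sketch follows the strategy of Haxell's original argument: grow an alternating tree from a maximum partial ISR, and if no augmentation appears, extract from the tree a small total dominating set for the classes it touches. The outline is sound, and the two obstacles you flag are the genuine ones. Two points worth tightening. First, the counting: you write that ``each class index beyond the root $k$ contributes at most one new candidate and one new blocker,'' but in the tree construction every vertex of $V_i$ becomes a candidate once $v_i$ enters as a blocker, not just one. The clean accounting is that $D$ consists of the $\sizeof{S^*}-1$ blockers together with, for each blocker $v_j$, the single candidate whose $f$-pointer first introduced $v_j$ into the tree; this already gives $\sizeof{D} \leq 2(\sizeof{S^*}-1)$ with no appeal to ``maximal branches.'' Second, when you verify total domination in $H_{S^*}$, remember that edges internal to each $V_i$ have been deleted, so you must ensure that a candidate $u \in V_i$ has its blocker $f(u)$ in a \emph{different} class $V_j$. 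This is automatic provided you require the blocker of $u \in V_i$ to lie in $I \setminus \{v_i\}$ (as you do in the inductive step but not explicitly at layer~$0$), and it should be stated.
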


Theorem \ref{thm:haxell} was originally stated in terms of hypergraphs
(see also \cite{king-haxell} for a formulation not in terms of
hypergraphs), and we have stated it above in a slightly modified but
equivalent formulation.

The first of the two lemmas we will prove in this section is a
deficiency version of Theorem \ref{thm:haxell}: using weaker bounds on
the size of total dominating sets, we can still obtain a ``large''
partial ISR. In particular, our proof will show that
Theorem~\ref{thm:haxell} is ``self-strengthening'', i.e., that the
following can be obtained as a Corollary to Theorem \ref{thm:haxell}
itself.

\begin{lemma}\label{lem:haxell-defic}
  Let $H$ be a graph, let $V_1, \ldots, V_n$ be disjoint subsets of
  $V(H)$, and let $k$ be a nonnegative integer. If, for all $S \subseteq [n]$, we have $\totdom(H_S) \geq 2\sizeof{S} - 1 - 2k$, then $(V_1, \ldots, V_n)$ has a
  partial ISR of size at least $n-k$.
\end{lemma}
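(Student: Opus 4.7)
The approach is to deduce the lemma from Theorem~\ref{thm:haxell} (matching the ``self-strengthening'' remark in the statement), by padding the system $(V_1, \ldots, V_n)$ with dummy representatives that can absorb the deficiency $2k$ and then discarding at most $k$ of them at the end. Concretely, I would form $H^+$ from $H$ by adjoining $kn$ new vertices $u_{i,j}$ indexed by $(i,j) \in [n] \times [k]$, with only the following edges: for each fixed $j$, the vertices $\{u_{1,j}, \ldots, u_{n,j}\}$ form a clique in $H^+$. In particular, no dummy is adjacent to any vertex of $H$ and no two dummies from different ``columns'' are adjacent. Then set $V'_i = V_i \cup \{u_{i,1}, \ldots, u_{i,k}\}$, so that $(V'_1, \ldots, V'_n)$ is a disjoint family in $V(H^+)$.

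The key computation is to verify that the hypothesis of Theorem~\ref{thm:haxell} is satisfied: for every $S \subseteq [n]$, $\totdom(H^+_S) \geq 2\sizeof{S} - 1$. For $\sizeof{S} \leq 1$ this is trivial (in fact $H^+_S$ is edgeless when $\sizeof{S}=1$, so $\totdom(H^+_S) = \infty$), so assume $\sizeof{S} \geq 2$. Because dummies have no neighbors in $V(H)$, any total dominating set $D$ of $H^+_S$ splits as $D_V \cup D_U$ with $D_V \subseteq V(H)$ and $D_U$ consisting of dummies: the part $D_V$ must totally dominate $H_S$ (since every $v\in \bigcup_{i\in S}V_i$ has its $H^+_S$-neighbors inside $V(H)$), and for each $j$ the clique $\{u_{i,j} : i \in S\}$ can only be totally dominated by at least two of its own vertices. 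This yields $\sizeof{D} \geq \totdom(H_S) + 2k \geq (2\sizeof{S} - 1 - 2k) + 2k = 2\sizeof{S} - 1$, as required.

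Applying Theorem~\ref{thm:haxell} then produces an ISR $R'$ of $(V'_1, \ldots, V'_n)$ in $H^+$. Since each column $\{u_{1,j}, \ldots, u_{n,j}\}$ is a clique, $R'$ contains at most one dummy from each column, hence at most $k$ dummies overall. Let $T \subseteq [n]$ index the sets for which $R'$ picks a vertex in $V_i$ rather than a dummy; then $\sizeof{T} \geq n - k$, and $R' \cap \bigcup_{i \in T} V_i$ is an independent set in $H$ (an induced subgraph of $H^+$) containing exactly one vertex from each $V_i$ with $i \in T$, giving the desired partial ISR.

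I expect the main obstacle to be tuning the construction so that the total domination bookkeeping cleanly decouples the ``real'' and ``dummy'' contributions. The two ingredients that make this work are (a) insulating the dummies from $V(H)$ so that $D_V$ inherits a total dominating property for $H_S$ whenever $D$ is total dominating in $H^+_S$, and (b) joining the dummies of each column into a clique so that each column contributes exactly $2$ to any total dominating set while admitting at most one representative into the independent set $R'$.
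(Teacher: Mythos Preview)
Your proposal is correct and is essentially identical to the paper's proof: the paper forms $H'$ as the disjoint union of $H$ with $k$ copies of $K_n$, distributing one vertex of each clique into each $V_i$, which is exactly your column construction $u_{i,j}$ with $\{u_{1,j},\ldots,u_{n,j}\}$ a clique for each $j$. The verification that $\totdom(H^+_S) \geq 2\sizeof{S}-1$ and the extraction of the partial ISR by discarding at most $k$ dummy representatives likewise match the paper line for line.
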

\begin{proof}
  Let $H'$ be the disjoint union of $H$ with $k$ copies of $K_n$,
  and let $V'_1, \ldots, V'_n$ be obtained from $V_1, \ldots, V_n$
  by defining $V'_i$ to be $V_i$ together with one vertex from
  each copy of $K_n$, chosen so that $V'_1, \ldots, V'_n$ are
  disjoint.

  Observe that if $(V'_1, \ldots, V'_n)$ has an ISR, then at most $k$
  of the vertices from the ISR are from the added copies of $K_n$; the
  remaining $n-k$ vertices yield a partial ISR of $(V_1, \ldots, V_n)$
  of size at least $n-k$, as desired. Thus, it suffices to show that
  $(V'_1, \ldots, V'_n)$ has an ISR.

  To do this, we apply Theorem~\ref{thm:haxell}. Let $S$ be any subset
  of $[n]$. We will show that $\totdom(H'_S) \geq 2\sizeof{S}-1$. If
  $\sizeof{S} \leq 1$ then there is nothing to show, so assume that
  $\sizeof{S} \geq 2$.

  Let $H'_1$ be the subgraph of $H'_S$ induced by the original
  vertices of $H$ and let $H'_2$ be the subgraph of $H'_S$ induced by
  vertices from the added copies of $K_n$. Since each $V_i$
  has exactly one vertex from each added $K_n$, we see that $H'_2$
  is isomorphic to $k$ copies of $K_{\sizeof{S}}$.

  As there are no edges joining $H'_1$ and $H'_2$, clearly
  $\totdom(H') = \totdom(H'_1) + \totdom(H'_2)$. By hypothesis,
  $\totdom(H'_1) \geq 2\sizeof{S}-1-2k$, and since $\totdom(K_t) = 2$
  for all $t \geq 2$, we have $\totdom(H'_2) = 2k$. It follows that
  $\totdom(H') \geq 2\sizeof{S}-1$, and by Theorem~\ref{thm:haxell},
  it follows that $(V'_1, \ldots, V'_n)$ has an ISR.
\end{proof}

The second lemma we will prove in this section lets us ``combine'' ISRs
for two different families of disjoint sets, under suitable
conditions. In order to state it, we require the following technical
definition.

Let $\xee$ and $\yee$ be two collections of pairwise disjoint subsets
  of $V(G)$. However, we make no
  disjointness requirements between sets in $\xee$ and sets in $\yee$.
  The pair $(\xee, \yee)$ is \emph{admissible} for $G$ if, for each
  edge $e \in E(G)$, at least one of the following holds:
  \begin{itemize}
  \item There is some $X \in \xee$ with both endpoints of $e$ in $X$,
  \item There is some $Y \in \yee$ with both endpoints of $e$ in $Y$,
  \item Both endpoints of $e$ are missing from all $X \in \xee$, or
  \item Both endpoints of $e$ are missing from all $Y \in \yee$.
  \end{itemize}

\begin{lemma}\label{lem:combine-isr}
  Let $G$ be a graph, and suppose $(\xee, \yee)$ is an admissible
  pair for $G$. If
  $\xee$ has an ISR $R_{\xee}$ in $G$ and
  $\yee$ has an ISR $R_{\yee}$ in $G$, then $G$ has an
  independent set $R \subseteq R_{\xee} \cup R_{\yee}$ that is a transversal of
  both $\xee$ and $\yee$.
\end{lemma}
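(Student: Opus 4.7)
The plan is to carefully combine the two given ISRs, leveraging the admissibility condition. The key structural consequence of admissibility is this: for any edge $ab \in E(G)$ with $a \in R_\xee$ and $b \in R_\yee$ (with $a \neq b$), at least one of $b \in X_a$ or $a \in Y_b$ must hold, where $X_a$ denotes the unique $\xee$-set containing $a$ and $Y_b$ denotes the unique $\yee$-set containing $b$. This follows because the two ``missing'' admissibility conditions are ruled out by $a \in X_a$ and $b \in Y_b$. This observation will drive the entire construction.

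First, I safely include $R_0 := R_\xee \cap R_\yee$ in $R$: every such vertex is isolated within $G[R_\xee \cup R_\yee]$, since both $R_\xee$ and $R_\yee$ are themselves independent. Next, I enlarge to a ``forced'' set $R^*$ by adjoining, for each $X$ with $X \cap R_\yee = \emptyset$, the unique vertex of $R_\xee \cup R_\yee$ lying in $X$ (namely its $R_\xee$-representative $r_\xee(X)$), and symmetrically a vertex $r_\yee(Y)$ for each $Y$ with $Y \cap R_\xee = \emptyset$. A direct application of the key observation shows that any edge between these two new families would force $r_\yee(Y) \in X$ (contradicting $X \cap R_\yee = \emptyset$) or $r_\xee(X) \in Y$ (contradicting $Y \cap R_\xee = \emptyset$), so $R^*$ is independent; and by construction it covers every $X$ with $X \cap R_\yee = \emptyset$ and every $Y$ with $Y \cap R_\xee = \emptyset$.

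To complete $R$, I extend $R^*$ by adjoining $r_\xee(X)$ for each remaining uncovered $X$ (and symmetrically $r_\yee(Y)$ for each remaining uncovered $Y$). The key observation implies that these new additions have no edges to $R^*$: an edge from a new $r_\xee(X)$ to some $r_\yee(Y') \in R^*$ would force $r_\yee(Y') \in X$, placing $r_\yee(Y') \in X \cap R^*$ and contradicting that $X$ is uncovered. The main obstacle I expect is resolving potential edges among the newly-added representatives themselves. By the key observation, an edge $r_\xee(X)\, r_\yee(Y)$ between two new additions satisfies $r_\yee(Y) \in X$ or $r_\xee(X) \in Y$, meaning one endpoint of the edge simultaneously covers both $X$ and $Y$, so we may drop the other endpoint without losing coverage. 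Globally resolving all such conflicts requires a bipartite argument on the conflict graph among the newly-added vertices; I plan to formalize this resolution by showing that the resulting conflict graph admits a consistent choice of which endpoint to retain at each edge, yielding the desired independent $R$ covering all of $\xee$ and $\yee$.
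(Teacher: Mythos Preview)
Your setup is sound: the key admissibility observation is correct, your set $R^*$ is independent for exactly the reasons you give, and the newly-added representatives do have no edges back to $R^*$. The gap is in the final step. You note that for each conflict edge $r_{\xee}(X)\,r_{\yee}(Y)$ among the new vertices, one endpoint double-covers, so \emph{locally} you may drop the other endpoint. But you have not shown that these local choices can be made \emph{globally} consistent. Concretely: if $r_{\yee}(Y)$ double-covers $X$, you drop $r_{\xee}(X)$ and commit to keeping $r_{\yee}(Y)$; yet $r_{\yee}(Y)$ may be adjacent to some other $r_{\xee}(X')$, and along \emph{that} edge it may be $r_{\xee}(X')$ that double-covers (forcing you to drop $r_{\yee}(Y)$). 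Your proposal ends with ``I plan to formalize this resolution,'' which is precisely the nontrivial content of the lemma, and bipartiteness of the conflict graph alone does not settle it.

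The paper handles this with a different mechanism. Rather than building up from $R_{\xee}\cap R_{\yee}$, it starts from $R_{\xee}\cup R_{\yee}$ and iteratively \emph{deletes} vertices. The engine is a degree bound you have not stated: every vertex of $R_{\xee}$ is incident to at most one $\yee$-edge, and every vertex of $R_{\yee}$ is incident to at most one $\xee$-edge (this follows because two $\yee$-edges at the same $R_{\xee}$-vertex would force two $R_{\yee}$-representatives into a common $Y$). With this bound, one repeatedly finds a ``dangerous'' degree-$1$ vertex whose incident edge is of the opposite type and deletes its neighbor; the deleted vertex's set is then covered by the surviving dangerous vertex. A final sweep removes any remaining positive-degree $R_{\yee}$-vertices, and the degree bound guarantees each such removal still leaves its $Y$ covered. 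Without this degree bound (or some equivalent structural control), your conflict-graph resolution is a genuine hole.
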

\begin{proof}
  Initially, let $R_0 = R_{\xee} \cup R_{\yee}$.  The set $R_0$
  clearly hits every $X_i$ and $Y_j$, but as there may be edges
  between $R_{\xee}$ and $R_{\yee}$, the set $R_0$ may not be
  independent. We next describe an algorithm for iteratively deleting
  vertices from $R_0$ in order to obtain an independent subset of
  $R_0$ that still hits every $X_i$ and every $Y_j$. Note that some
  vertices of $R_0$ may lie in $R_{\xee} \cap R_{\yee}$; such vertices
  are automatically isolated vertices in $R_0$.

To describe the algorithm, it will help to classify the edges between
$R_{\xee}$ and $R_{\yee}$. If $uv$ is an edge of $G$ with
$u \in R_{\xee}$ and $v \in R_{\yee}$, we say that $uv$ is an
\emph{$\xee$-edge} if $\{u,v\} \subseteq X_i$ for some $i$, and that
$uv$ is a \emph{$\yee$-edge} if $\{u,v\} \subseteq Y_j$ for some $j$.
Admissibility of the pair $(\xee, \yee)$ implies that any edge joining
a vertex of $R_{\xee}$ with a vertex of $R_{\yee}$ must be an
$\xee$-edge or a $\yee$-edge, since such edges intersect both a set in
$\xee$ and a set in $\yee$. It may be possible for an edge to be both
an $\xee$-edge and a $\yee$-edge.

\begin{claim}\label{claim:edgedeg}
  Every vertex of $R_{\xee}$ is incident to at most one $\yee$-edge, and every vertex
  of $R_{\yee}$ is incident to at most one $\xee$-edge.
\end{claim}
\begin{proofclaim}
  Suppose that $u \in R_{\xee}$ and $uv_1$, $uv_2$ are two different
  $\yee$-edges incident to $u$. It follows that
  $\{u, v_1\} \subseteq Y_1$ and $\{u, v_2\} \subseteq Y_2$ for
  some sets $Y_1$ and $Y_2$ in $\yee$, and since the sets $Y \in \yee$ are
  pairwise vertex-disjoint, this implies that $Y_1 = Y_2$. Hence, $v_1$ and
  $v_2$ lie in the same set $Y$. Since $R_{\yee}$ is an ISR of
  $\yee$ and $\{v_1, v_2\} \subseteq R_{\yee}$, this is a
  contradiction.

The same argument, interchanging the roles of $\xee$ and $\yee$, proves the claim about
$\xee$-edges.
\end{proofclaim}
Now consider the following algorithm, \gp{which defines a sequence of vertex sets $R_0, R_1, \ldots$} starting with
$R_0 = R_{\xee} \cup R_{\yee}$. Say a vertex $v$ is \emph{dangerous} for the vertex set $R_i$ if it
has degree $1$ in $G[R_i]$, and either $v \in R_{\xee}$ and the incident edge
is not an $\xee$-edge, or $v \in R_{\yee}$ and the incident edge is not a
$\yee$-edge. Thus, if $v \in R_{\xee}$ is dangerous \gp{for $R_i$}, then its incident edge in $G[R_i]$
is a $\yee$-edge, and vice versa; the awkward negative wording is
intended to exclude the possibility that the incident edge may be
\emph{both} an $\xee$-edge and a $\yee$-edge.

Note that vertices which were not initially dangerous \gp{for $R_0$}
may become dangerous \gp{for some later $R_j$} as their neighbors are
deleted, while vertices which were initially dangerous become
non-dangerous if their neighbor is deleted.  Given the set $R_{i}$, we
either produce a new set $R_{i+1}$ and proceed to the next round, or
produce the final set $R$:
\begin{itemize}
\item If $R_{i}$ has a dangerous vertex:
  \begin{itemize}
  \item Let $v$ be a vertex that is dangerous for $R_{i}$, and let $w$ be its unique neighbor in $R_{i}$.
  \item Let $R_{i+1} = R_{i} - w$, and proceed to the next round.
  \end{itemize}
\item Otherwise, if $R_{i}$ has no dangerous vertices, then let $R$ be obtained from $R_{i}$
  by deleting every vertex of $R_{\yee} \cap R_{i}$ that has
  positive degree in $G[R_{i}]$.
\end{itemize}
This algorithm clearly terminates, since
$\sizeof{R_{i+1}} < \sizeof{R_{i}}$ whenever $R_{i}$ has a dangerous
vertex, and the resulting set $R$ is clearly independent.  It remains
to show that $R$ hits every set $X \in \xee$ and every set
$Y \in \yee$.

First consider any set $X \in \xee$, and let $w$ be the representative
of $X$ in the set $R_{\xee}$. If $w$ is still in $R$, then clearly
$R \cap X \neq \emptyset$. Otherwise, $w \in R_{i+1} - R_{i}$ for some
$i$, which only occurs when $w$ is the sole neighbor of vertex $v$
which was dangerous in $R_{i}$.  (It cannot happen that $w$ is
deleted in the last step as a member of $R_{\yee} \cap R_{i}$, since
if $w\in R_{\yee}$ as well, then it is isolated in $R_0$ and thus in
every subsequent $R_j$, and hence never deleted in our
algorithm). Thus, $v$ is isolated in $R_{i+1}$ and every subsequent $R_j$,
and the algorithm therefore never deletes $v$ in the rest of its
execution. Hence $v \in R$. \gp{Furthermore, since $v$ was dangerous
  in $R_{i}$ with $v \in R_{\yee}$, we see that the edge $vw$ was an
  $\xee$-edge.}  This means that $\{v,w\} \subseteq X'$ for some
$X' \in \xee$, and since the sets $X \in \xee$ are pairwise disjoint,
this forces $X = X'$, so $v \in X$.  Hence $R \cap X \neq \emptyset$.

Next consider any set $Y \in \yee$, and let $w$ be the representative
of $Y$ in the set $R_{\yee}$. As before, if $w \in R$ then we are
done. Otherwise, $w$ was deleted at some point. If
$w \in R_{i} - R_{i+1}$ for some $i$ (that is, if $w$ was deleted
because of some dangerous vertex $v \in R_{\xee}$), then by the same
argument as before, we have $v \in R$ at the end of the algorithm, and
since $vw$ was a $\yee$-edge, we have $v \in Y$, so that
$R \cap Y \neq \emptyset$.

Otherwise, $w$ was deleted in the last step, when no dangerous
vertices remained in $R_{i}$. This implies that either $w$ had degree
at least $2$ in $R_i$, or $w$ had degree $1$ and the incident edge was
a $\yee$-edge (since if the incident edge were not a $\yee$-edge then
$w$ itself would be dangerous). Since, by Claim~\ref{claim:edgedeg},
the vertex $w$ is incident to at most one $\xee$-edge, in both cases
$w$ was incident to a $\yee$-edge in $R_i$. Let $vw$ be a $\yee$-edge
incident to $w$, with $v \in R_i$. Since $w$ was deleted in the last
step, no subsequent step could have deleted $v$, so $v \in R$ at the
end. Furthermore, since $vw$ is a $\yee$-edge, we have $v \in
Y$. Hence $R \cap Y \neq \emptyset$.

Thus, after executing the algorithm, $R$ is an independent set in $G$ that
intersects every set $X \in \xee$ and every set $Y \in \yee$.
\end{proof}

\section{Graphs with few triangles}\label{sec:few-tris}

Our goal in this section is to prove the following theorem.

\begin{theorem}\label{thm:triangle-11}
  Let $H$ be a graph with $\Delta(H)\leq 2$, and let $G$ be a
  graph obtained from $H$ by gluing in vertex-disjoint copies of
  $K_4$. Let $\mathcal{\tee}$ be the set of triangles in $H$.  Then there
  is a set of vertices $Z$ with $\sizeof{Z} \leq \sizeof{\tee}/4$,
  containing at most one vertex from each cycle in $H$, such that
  $\chi(G-Z) \leq 4$. It also holds that
  $\sizeof{V(G)-Z} \geq 11\sizeof{V(G)}/12$.
\end{theorem}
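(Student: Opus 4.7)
My plan is to mirror the structure of Theorem~\ref{thm:longodd-910} while exploiting the ISR machinery of Section~\ref{sec:isr-lems}. I would first apply an analogous three-step reduction to assume $H$ is $2$-regular, each $Y_i$ is independent in $H$, and $Y_1, \ldots, Y_q$ partition $V(H)$; one must verify that each reduction step does not increase the triangle count $\sizeof{\tee}$ (the added edges/vertices form only even cycles or controlled structure). As before, color $H$-edges red and $K_4$-edges blue.

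To obtain a $4$-coloring of $G - Z$, the strategy is to produce $4$ disjoint ISRs of the $K_4$-partition on $V(G) - Z$. I would apply Theorem~\ref{thm:main-2isr} to extract $2$ disjoint ISRs $R_1, R_2$ of $(Y_1, \ldots, Y_q)$, giving two color classes. On the residual graph $G' := G[V(G) - R_1 - R_2]$, each $K_4$-part reduces to a pair $\{u_i, v_i\}$ joined by a blue edge, with up to $2$ red edges per vertex. A $4$-coloring then requires $G' - Z$ to be bipartite; equivalently, to produce $2$ more disjoint ISRs of the residual pair-partition.

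The key quantitative step is to choose $R_1, R_2$ so that odd cycles in $G'$ are controlled by triangles of $H$. An averaging argument over ISR choices---analogous to the random $2$-coloring of $B(T^*)$ in Theorem~\ref{thm:longodd-910}---should yield a pair $(R_1, R_2)$ for which the number of surviving triangles in $G'$ is at most $\sizeof{\tee}/4$; for each surviving triangle we then delete one vertex to break the odd $3$-cycle, contributing to $Z$. Lemmas~\ref{lem:haxell-defic} and~\ref{lem:combine-isr} are invoked to ensure that the partial ISRs produced for the residual pair-partition combine consistently with $R_1, R_2$ into a valid $4$-coloring, using appropriate admissible families $\xee, \yee$ built from the $K_4$-partition and the triangle structure of $H$.

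The main obstacle will be accounting for all potential odd cycles in $G'$---not only surviving triangles of $H$, but also ``mixed'' cycles alternating blue $K_4$-edges with red $H$-paths, and surviving long odd cycles of $H$. I expect that long odd $H$-cycles are unlikely to survive in $G'$ in significant numbers (since they require all vertices to evade $R_1 \cup R_2$), and that mixed cycles can be controlled via a careful choice of $R_1, R_2$, possibly using an analog of the ``unhappy vertex'' optimization of Theorem~\ref{thm:longodd-910}. Finally, the bound $\sizeof{V(G) - Z} \geq 11\sizeof{V(G)}/12$ follows from $\sizeof{Z} \leq \sizeof{\tee}/4 \leq \sizeof{V(G)}/12$, since the triangles of $H$ are vertex-disjoint and contribute $3\sizeof{\tee}$ vertices to $V(G)$.
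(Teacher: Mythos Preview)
Your approach has a genuine gap at precisely the point you flag as ``the main obstacle.'' After removing two disjoint ISRs $R_1, R_2$ of the $K_4$-partition, the residual graph $G'$ has at each vertex up to two red edges and exactly one blue edge. There is no reason for the odd cycles of $G'$ to be governed by the triangles of $H$: a mixed odd cycle can alternate blue pair-edges with red $H$-paths of arbitrary parity, and such cycles bear no direct relation to $\tee$. Indeed, take $H$ triangle-free, so that the theorem demands $Z = \emptyset$; your plan would then require $G'$ itself to be bipartite, which need not hold for any choice of $R_1, R_2$. Your hope that long odd $H$-cycles are ``unlikely to survive'' is also not quantitatively grounded---Theorem~\ref{thm:main-2isr} involves no randomness, and even an averaging argument over all pairs of disjoint ISRs would have to kill simultaneously all long odd $H$-cycles, all mixed odd cycles, and all but $\sizeof{\tee}/4$ of the $H$-triangles, which is far more than a single expectation bound delivers. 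Finally, your invocations of Lemmas~\ref{lem:haxell-defic} and~\ref{lem:combine-isr} are not attached to concrete families, so at present they do no work.

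The paper's argument is quite different and sidesteps this obstacle. Rather than peeling off two ISRs and hoping the remainder is nearly bipartite, it follows Pei's template: find a \emph{single} independent set $R$ hitting every added $K_4$ and all but $k = \lfloor \sizeof{\tee}/4 \rfloor$ of the $H$-cycles. The partial ISR of the cycle family comes from Lemma~\ref{lem:haxell-defic}, using the estimate $\totdom(H_S) \geq \sizeof{V(H_S)}/2 \geq 2\sizeof{S} - \sizeof{\tee}/2$ (every component of $H_S$ has at most four vertices); it is then merged with an ISR of the $K_4$-family via Lemma~\ref{lem:combine-isr}. Deleting one edge from each of the $k$ missed cycles leaves $H - R - F$ a disjoint union of paths, which can be completed to a single cycle; gluing in the residual triples $Y_j \setminus R$ produces a cycle-plus-triangles graph, $3$-colorable by Fleischner--Stiebitz. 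A fourth color on $R$ then $4$-colors $G - F$, and $Z$ consists of one endpoint of each monochromatic edge of $F$. No reduction to the $2$-regular, partitioned case is needed.
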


If $H$ has no triangles, then it has girth at least 4. As mentioned in
the introduction, Pei \cite{Pei} \gp{proved} that after gluing in
$K_4$'s to such an $H$, we get a graph $G$ that is 4-colorable. The
main idea of Pei's proof is to find an independent set $R$ hitting
each added $K_4$ and each cycle, and to observe that $G-R$ can then be
viewed as a subgraph of a ``cycle-plus-triangles'' graph. Since every
such graph is $3$-colorable (by the celebrated result of Fleischner
and Stiebitz \cite{cycletri}), using a fourth color on the set $R$
gives the desired $4$-coloring of $G$. We will adapt this idea in
order to $4$-color ``most'' of the vertices of $G$ in the case where
$H$ has few triangles.

Letting $\mathcal{T}$
denote the set of triangles in $H$ (where $\Delta(H)\leq 2$), we observe that
Pei's result easily yields a partial result itself. Deleting one edge
from each triangle yields a graph $H'$ with girth at least 4, and after $4$-coloring the resulting
``glued graph'' $G'$, we must, at worst, uncolor one vertex from each
triangle (an endpoint of a deleted edge) in order to obtain a proper
partial $4$-coloring (of at least $|V(G)-\mathcal{T}|\geq \tfrac{2}{3}|V(G)|$ vertices). We improve this to a partial coloring of $\tfrac{11}{12}$ of the vertices by showing that only $\sizeof{\tee}/4$
vertices must be deleted, rather than $\sizeof{\tee}$ vertices as in
the simple argument.

\begin{proof}[Proof of Theorem \ref{thm:triangle-11}]
  Let $X_1, \ldots, X_p$ be the vertex sets of the cycles of $H$ and
  let $Y_1, \ldots, Y_q$ be the vertex sets of the added copies of
  $K_4$. Let $\xee = (X_1, \ldots, X_p)$ and let
  $\yee = (Y_1, \ldots, Y_q)$.

  Note that $\yee$ has an ISR in $G$ by Theorem
  \ref{thm:haxell-maxdeg}. Our general strategy will be to first apply
  Lemma \ref{lem:haxell-defic} to find a ``large'' subfamily
  $\xee' \subseteq \xee$ that admits an ISR. We will then be able to
  apply Lemma~\ref{lem:combine-isr} to get an independent set $R$ in $G$
  which is a transversal of both $\xee'$ and $\yee$. Finally, we will
  use this $R$ (in a similar way to the set $R$ described above in
  Pei's proof) to define our desired 4-coloring.

\begin{claim}\label{claim:p-k} $\xee$ has a partial ISR with
  $p-k$ vertices, where $k=\lfloor \tfrac{\mathcal{T}}{4}\rfloor$.
\end{claim}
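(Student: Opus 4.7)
The plan is to invoke Lemma~\ref{lem:haxell-defic} with ambient graph $G$, the family $\xee=(X_1,\ldots,X_p)$, and deficiency parameter $k=\lfloor \sizeof{\mathcal{T}}/4\rfloor$; its conclusion is precisely the partial ISR of size $p-k$ asserted by the claim. The task thus reduces to verifying
\[
\totdom(G_S)\ge 2\sizeof{S}-1-2k\qquad\text{for every }S\subseteq[p].
\]

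Fix $S\subseteq[p]$ and analyze $G_S$. Since distinct cycles of $H$ share no edges and all within-$X_i$ edges are deleted in passing to $G_S$, the only surviving edges of $G_S$ are $K_4$-edges whose two endpoints lie in different cycles $X_i,X_j$ with $i,j\in S$. Because the added $K_4$'s are pairwise vertex-disjoint, every connected component of $G_S$ lies inside a single $K_4$: each $K_4$ $Q$ meeting at least two different cycles of $S$ contributes the complete multipartite graph on $Q\cap\bigcup_{i\in S}X_i$, its parts indexed by the cycles $Q$ hits. If $G_S$ has an isolated vertex, then $\totdom(G_S)=\infty$ and the bound is immediate, so I may assume every vertex of $\bigcup_{i\in S}X_i$ lies in some such contributing $K_4$; under this assumption every component of $G_S$ is a complete multipartite graph with at least two parts.

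The key observation is that any complete multipartite graph with at least two parts has total domination number exactly $2$ (choose one vertex from each of two parts). Writing $q_S$ for the number of contributing $K_4$'s,
\[
\totdom(G_S)=2q_S.
\]
A one-line vertex count lower-bounds $q_S$: each contributing $K_4$ contains at most $4$ vertices of $\bigcup_{i\in S}X_i$, while the total such vertex count is $\sum_{i\in S}\sizeof{X_i}\ge 3t_S+4(\sizeof{S}-t_S)=4\sizeof{S}-t_S$, where $t_S$ is the number of triangles in $S$. Hence $q_S\ge \lceil(4\sizeof{S}-t_S)/4\rceil=\sizeof{S}-\lfloor t_S/4\rfloor$, and so
\[
\totdom(G_S)\ge 2\sizeof{S}-2\lfloor t_S/4\rfloor\ge 2\sizeof{S}-2\lfloor\sizeof{\mathcal{T}}/4\rfloor=2\sizeof{S}-2k,
\]
which is even slightly stronger than the required $2\sizeof{S}-1-2k$.

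\textbf{The anticipated obstacle} was producing a strong enough lower bound on $\totdom(G_S)$; I expected this to require a delicate charging argument combining the pairing forced by total domination with the small size of each $K_4$. In fact, vertex-disjointness of the added $K_4$'s makes every component of $G_S$ a complete multipartite graph of total domination number exactly $2$, and the required bound falls out of distributing the $\ge 4\sizeof{S}-t_S$ vertices of $G_S$ into $K_4$-sized pieces.
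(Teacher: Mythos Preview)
Your proof is correct and follows essentially the same approach as the paper: invoke Lemma~\ref{lem:haxell-defic}, and verify the total-domination hypothesis by observing that every component of $G_S$ lies inside a single glued $K_4$ (hence has at most four vertices and total domination number at least $2$), while the vertex count satisfies $\sizeof{V(G_S)}\ge 4\sizeof{S}-\sizeof{\tee}$.

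The only cosmetic differences are that the paper argues via the cruder inequality $\totdom(G_S)\ge\sizeof{V(G_S)}/2$ and then does the floor manipulation at the level of $\totdom$, whereas you identify each nontrivial component as a complete multipartite graph with $\totdom$ exactly $2$ and push the ceiling onto the component count $q_S$; this yields the marginally stronger bound $\totdom(G_S)\ge 2\sizeof{S}-2k$ rather than $2\sizeof{S}-2k-1$, but the underlying counting is the same.
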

\begin{proofclaim}
  Let $S$ be any subset of $[p]$. In order for Lemma
  \ref{lem:haxell-defic} to yield our desired result, we must show
  that $\totdom(H_S) \geq 2\sizeof{S}-2k-1$. Since all edges
  induced by the sets $X_i$ are removed in constructing $H_S$, the only
  edges remaining in $H_S$ are edges that were added by the
  copies of $K_4$. Hence, every component of $H_S$ has at most $4$ vertices. Since every graph
    -- and, in particular, every component of $H_S$ --  has total domination number at least $2$, it follows that
  \[\totdom(H_S) \geq \sizeof{V(H_S)}/2.\] On the other hand, since
  every set $X_i$ is the vertex set of either a triangle or a cycle of
  length at least $4$, we have \[\sizeof{V(H_S)} \geq 4\sizeof{S} - \sizeof{\tee}.\]
  Combining these inequalities yields
  $$\totdom(H_S) \geq 2\sizeof{S} - \tfrac{\sizeof{\tee}}{2}=  2\sizeof{S} - 2\left(\tfrac{\sizeof{\tee}}{4}\right)\geq  2\sizeof{S} - 2\left(\floor{\tfrac{\sizeof{\tee}}{4}} + \tfrac{3}{4}\right) = 2\sizeof{S} - 2k - \tfrac{3}{2}.$$
  Since $\totdom(H_S)$ is an integer, this gives our desired bound.
  \end{proofclaim}

  Let $\xee'$ be the subfamily of $\xee$ consisting
  of the sets containing a vertex from the partial ISR found in Claim \ref{claim:p-k}. Then $\xee'$ has an ISR in $G$.

  \begin{claim}\label{claim:admissible} There is a independent set $R$ in $G$ that is a transversal of both $\xee'$ and of $\yee$.
  \end{claim}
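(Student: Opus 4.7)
The plan is to deduce Claim~\ref{claim:admissible} by a single application of Lemma~\ref{lem:combine-isr} to the pair $(\xee', \yee)$. This requires three ingredients: an ISR of $\xee'$ in $G$, an ISR of $\yee$ in $G$, and admissibility of the pair $(\xee', \yee)$ for $G$.

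An ISR of $\xee'$ in $G$ is immediate from the very choice of $\xee'$: by construction, $\xee'$ is exactly the subfamily of $\xee$ hit by the partial ISR produced by Claim~\ref{claim:p-k}, so that partial ISR is a (full) ISR for $\xee'$. Because the proof of Claim~\ref{claim:p-k} invokes Lemma~\ref{lem:haxell-defic} inside $G$ (the total-domination computation relies on the presence of the added $K_4$-edges), this set is genuinely independent in $G$, not merely in $H$. An ISR of $\yee$ in $G$ comes from applying Theorem~\ref{thm:haxell-maxdeg} to $H$ with the sets $Y_1,\dots,Y_q$, each of size $4$; this produces an independent set in $H$ hitting each $Y_j$ once. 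Since the added copies of $K_4$ are vertex-disjoint, every edge of $G$ outside $E(H)$ lies inside a single $Y_j$, so no such edge connects two selected vertices; hence the set remains independent in $G$.

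The one step requiring actual work is verifying admissibility of $(\xee', \yee)$ for $G$. Every edge of $G$ is either an $H$-edge or an edge of an added $K_4$. An edge of an added $K_4$ has both endpoints in some $Y \in \yee$, satisfying the second admissibility condition. An $H$-edge $uv$ lies either in a path component of $H$ or inside one of the cycles of $H$. If it lies inside a cycle whose vertex set belongs to $\xee'$, then the first condition is satisfied. Otherwise, because $\Delta(H) \leq 2$ forces the cycles of $H$ to be pairwise vertex-disjoint and disjoint from the path components, neither $u$ nor $v$ lies in any set of $\xee'$, so the third admissibility condition holds. With admissibility established, Lemma~\ref{lem:combine-isr} yields an independent set $R \subseteq R_{\xee'} \cup R_{\yee}$ that is simultaneously a transversal of $\xee'$ and of $\yee$, which is precisely Claim~\ref{claim:admissible}. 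The only part of the argument demanding any case analysis is the admissibility check, and even that is routine once one exploits the pairwise vertex-disjointness of the cycles of $H$.
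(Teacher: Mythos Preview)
Your proof is correct and follows essentially the same approach as the paper: verify that $(\xee',\yee)$ is admissible for $G$ and then invoke Lemma~\ref{lem:combine-isr}, using the partial ISR from Claim~\ref{claim:p-k} and the ISR from Theorem~\ref{thm:haxell-maxdeg}. You are in fact slightly more careful than the paper, explicitly checking that both ISRs are independent in $G$ (not just $H$) and explicitly handling the case of $H$-edges lying in path components when verifying admissibility.
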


  \begin{proofclaim} Since $\xee'$ and $\yee$ \gp{each} have an ISR in $G$, we get our desired result via
  Lemma~\ref{lem:combine-isr} provided that $(\xee', \yee)$
  is admissible for $G$. To this end, observe that
  every edge $e \in E(G)$ belongs to either a cycle of $H$ or one of the added $K_4$'s, and hence falls into one of the following categories:
  \begin{itemize}
  \item \gp{$e \in G[X]$ for some $X \in \xee - \xee'$, so that
    both endpoints of $e$ are missing from all $X \in \xee$, or}
  \item \gp{$e \in G[X]$ for some $X \in \xee'$, so that both endpoints of $e$ are in $X$, or}
  \item $e$ is an added edge from some $K_4$, hence both endpoints of $e$ are in  $Y$ for some $Y \in \yee$.
  \end{itemize}
  It follows that $(\xee', \yee)$ is admissible for $G$.
  \end{proofclaim}

  Let $F$ be a set consisting of one edge from each cycle of $H$ not
  represented in $\xee'$ (so $|F|=k$). Let $J = H-R-F$ and observe
  that $J$ is a graph of maximum degree at most $2$ with no
  cycles. By adding edges between the endpoints of path
  components in $J$, we obtain a Hamiltonian cycle $J'$ on the same
  vertex set.  For each $j \in [q]$, let $Y'_j = Y_j - R$; we have
  $\sizeof{Y'_j} = 3$ for all $j$, since $R$ intersects each $Y_j$ in
  exactly one vertex. Let $J^*$ be the graph obtained from $J'$ by
  gluing in a triangle on each $Y_j'$. By Fleischner and
  Stiebitz's \cite{cycletri} cycle + triangles result, we get that
  $J^*$ is $3$-colorable. As $G-F-R$ is a subgraph of $J^*$, it
  follows that $G-F-R$ is $3$-colorable. Using a fourth color on
  the independent set $R$ yields a $4$-coloring of $G-F$. Let $Z$
  be a vertex set consisting of one endpoint of each monochromatic
  edge in $F$. Now $G-Z$ is properly $4$-colored, and we have
  $|Z|\leq k\leq \tfrac{|\mathcal{T}|}{4}$. Since
    $|\mathcal{T}|\leq \tfrac{|V(G)|}{3}$, this implies
    $\sizeof{V(G)-Z} \geq 11\sizeof{V(G)}/12$. Furthermore, since $F$
    has at most one edge from each cycle, the vertex set $Z$ has at most
    one vertex from each cycle, as desired. Theorem \ref{thm:triangle-11} now follows.
\end{proof}

  As stated, Theorem~\ref{thm:triangle-11} gives us no control over
  which cycles contain uncolored vertices, in contrast to
  Theorem~\ref{thm:longodd-910} which guarantees that the uncolored
  vertices are contained in the long odd cycles of $H$. However, it is possible to refine the statement of Lemma~\ref{lem:haxell-defic} so that when we apply it in the proof of Theorem \ref{thm:triangle-11}, only ``dummy vertices'' are added to the sets $X_i$ obtained from triangles. With this change, one
  can guarantee that the set $R$ hits all long odd cycles, and that
  all uncolored vertices lie in triangles of $H$. Proving this
  formally would require more technical conditions in the hypothesis
  of Lemma~\ref{lem:haxell-defic}, so in the interest of clarity
  we have opted to only formally prove the simpler formulation.

\section{Theorems 1.2 and 1.3}\label{sec:final}

Combining Theorems \ref{thm:longodd-910} and \ref{thm:triangle-11} gives us an immediate proof of   Theorem~\ref{thm:main-cycles}.

\renewcommand\thesection{1}
\setcounter{proposition}{1}

\begin{theorem}
  Let $H$ be a graph with $\Delta(H) \leq 2$, and let $G$ be obtained
  from $H$ by gluing in vertex-disjoint copies of $K_4$. If $H$
  contains at most one odd cycle of length exceeding $3$, or if $H$
  contains at most $3$ triangles, then $\chi(G) \leq 4$.
\end{theorem}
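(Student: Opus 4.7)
The plan is to split into two cases, each directly corresponding to one of the hypotheses, and in each case invoke the appropriate fractional/partial coloring result already established (Theorem~\ref{thm:longodd-910} or Theorem~\ref{thm:triangle-11}). The key observation is that both of those results bound the size of an exceptional set $Z$ by a fraction of the number of ``bad'' cycles, and when that count is small enough, integrality of $|Z|$ forces $Z=\emptyset$, giving a $4$-coloring of $G$ itself.

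First I would handle the case in which $H$ has at most one odd cycle of length exceeding $3$. In the notation of Theorem~\ref{thm:longodd-910}, this means $\sizeof{\cee} \leq 1$, so that theorem supplies a set $Z$ with $\sizeof{Z} \leq \sizeof{\cee}/2 \leq 1/2$ for which $G-Z$ is $4$-colorable. Since $|Z|$ is a nonnegative integer, $Z=\emptyset$, and so $\chi(G)\leq 4$.

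Next I would handle the case in which $H$ has at most $3$ triangles. In the notation of Theorem~\ref{thm:triangle-11}, this means $\sizeof{\tee} \leq 3$, so that theorem supplies a set $Z$ with $\sizeof{Z} \leq \sizeof{\tee}/4 \leq 3/4$ for which $G-Z$ is $4$-colorable. Again integrality forces $Z=\emptyset$ and hence $\chi(G)\leq 4$.

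There is no real obstacle at this stage; all of the work is already embedded in the two partial-coloring theorems of Sections~\ref{sec:long-odd} and~\ref{sec:few-tris}. The only ``trick'' is recognizing that the fractional bounds $\sizeof{\cee}/2$ and $\sizeof{\tee}/4$ in those theorems were crafted so that the two combinatorial thresholds in Theorem~\ref{thm:main-cycles} ($\sizeof{\cee}\leq 1$ and $\sizeof{\tee}\leq 3$) collapse to $|Z|=0$ via the ceiling/floor argument above.
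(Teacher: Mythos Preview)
Your proposal is correct and matches the paper's own proof essentially verbatim: split into the two cases, apply Theorem~\ref{thm:longodd-910} (giving $\sizeof{Z}\le 1/2$) or Theorem~\ref{thm:triangle-11} (giving $\sizeof{Z}\le 3/4$), and use integrality to conclude $Z=\emptyset$. There is nothing to add.
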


\begin{proof}
If $H$ contains at most one odd cycle of length exceeding $3$, then we can apply Theorem \ref{thm:longodd-910} with $|\mathcal{C}|\leq 1$ to obtain
    a set of vertices $Z$ with $\sizeof{Z} \leq 1/2$ such that $G-Z$ is $4$-colorable.
    If $H$ contains at most $3$ triangles, then we can apply Theorem~\ref{thm:triangle-11} with $\sizeof{\tee} \leq 3$ to obtain a set of vertices $Z$ with $\sizeof{Z} \leq 3/4$ such that $G-Z$ is $4$-colorable. In either case, as $\sizeof{Z}$ is an integer, we have $Z = \emptyset$ and so $G$ is $4$-colorable.
\end{proof}

Theorem~\ref{thm:longodd-910} is weakest when most vertices of $H$ lie
in long odd cycles; Theorem~\ref{thm:triangle-11} is weakest when most
vertices of $H$ lie in triangles. These worst-case scenarios cannot
happen simultaneously; combining these bounds gives a stronger overall
bound on the number of vertices in a $4$-colorable subgraph, namely, Theorem \ref{thm:main-fraction}

\begin{theorem}
  Let $H$ be a graph with $\Delta(H) \leq 2$, and let $G$ be obtained
  from $H$ by gluing in vertex-disjoint copies of $K_4$. Then there
  is a set of vertices $Z$ with $\sizeof{Z} \leq \sizeof{V(G)}/22$
  such that $\chi(G-Z) \leq 4$.
\end{theorem}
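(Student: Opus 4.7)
The plan is to combine Theorem~\ref{thm:longodd-910} and Theorem~\ref{thm:triangle-11}: each produces a candidate deletion set, and the final $Z$ will be whichever is smaller. The two theorems are each tight in a different regime (long odd cycles versus triangles), and the key observation that makes the combination succeed is that long odd cycles and triangles in $H$ occupy disjoint vertex sets, so both regimes cannot simultaneously contribute a large number of vertices.

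More precisely, I would write $n = \sizeof{V(G)}$, let $\cee$ be the set of odd cycles of $H$ of length exceeding $3$, and let $\tee$ be the set of triangles of $H$. Applying Theorem~\ref{thm:longodd-910} to $G$ yields a set $Z_1$ with $\sizeof{Z_1} \leq \sizeof{\cee}/2$ such that $G - Z_1$ is $4$-colorable; applying Theorem~\ref{thm:triangle-11} yields a set $Z_2$ with $\sizeof{Z_2} \leq \sizeof{\tee}/4$ such that $G - Z_2$ is $4$-colorable. Since each long odd cycle in $\cee$ contributes at least $5$ vertices to $V(\cee)$, and each triangle contributes exactly $3$ vertices to $V(\tee)$, these bounds refine to $\sizeof{Z_1} \leq \sizeof{V(\cee)}/10$ and $\sizeof{Z_2} \leq \sizeof{V(\tee)}/12$. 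Because $V(\cee)$ and $V(\tee)$ are disjoint subsets of $V(H) = V(G)$, we have $\sizeof{V(\cee)} + \sizeof{V(\tee)} \leq n$.

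Taking $Z$ to be the smaller of $Z_1, Z_2$, the desired bound $\sizeof{Z} \leq n/22$ follows by a short pigeonhole-style argument: if both $\sizeof{Z_1} > n/22$ and $\sizeof{Z_2} > n/22$, then $\sizeof{V(\cee)} > 5n/11$ and $\sizeof{V(\tee)} > 6n/11$, and summing these contradicts $\sizeof{V(\cee)} + \sizeof{V(\tee)} \leq n$. Since the two theorems being combined already do all the heavy lifting, I do not expect any significant obstacle here; the argument is essentially a balanced-worst-case calculation, and the constant $22$ arises precisely from equalizing the two bounds $\sizeof{V(\cee)}/10$ and $\sizeof{V(\tee)}/12$ subject to $\sizeof{V(\cee)} + \sizeof{V(\tee)} \leq n$.
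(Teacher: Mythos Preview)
Your proposal is correct and follows essentially the same approach as the paper: both arguments combine Theorems~\ref{thm:longodd-910} and~\ref{thm:triangle-11} via the observation that $V(\cee)$ and $V(\tee)$ are disjoint, with the constant $22$ emerging from balancing the bounds $\sizeof{V(\cee)}/10$ and $\sizeof{V(\tee)}/12$. The only cosmetic difference is that the paper phrases the balancing as a convex combination of the two inequalities (with weight $\lambda = 5/11$), whereas you take the minimum of the two deletion sets and argue by contradiction; these are equivalent formulations of the same optimization.
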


\begin{proof}
  Let $\nt$ and $\nl$ denote the number of vertices of $H$
  that lie in triangles and in odd cycles of length exceeding $3$, respectively,
  and let $\nx$ denote the number of other vertices in $H$,  so that $|V(H)|=\sizeof{V(G)}= \nx +\nt + \nl$. Let $\nf$ denote the number of vertices of $G$ in a largest $4$-colorable induced subgraph.

  Theorem \ref{thm:longodd-910} says that $\nf \geq |V(G)|-\tfrac{|\mathcal{C}|}{2}$, where $\mathcal{C}$ is the set of all odd cycles in $H$ of length exceeding 3. Since $|\mathcal{C}|\leq \tfrac{\nl}{5}$, we get that
  \[\nf \geq \nx + \nt + \frac{9}{10}\nl.\]
  Similarly, Theorem \ref{thm:triangle-11} says that $\nf \geq |V(G)|-\tfrac{|\mathcal{T}|}{4}$, where $\mathcal{T}$ is the set of all triangles in $H$. Since $|\mathcal{T}|=\tfrac{\nt}{3}$, we get that
    \[\nf \geq \nx + \frac{11}{12}\nt + \nl.\]
  Given any $\lambda \in [0,1]$, we can take a convex combination of the above two inequalities (with $\lambda$ times the first and $(1-\lambda)$ times the second), to get
  \[ \nf \geq \nx + \left(\lambda + \frac{11}{12}(1-\lambda)\right)\nt +
    \left(\frac{9}{10}\lambda + (1-\lambda)\right)\nl. \]
  Setting $\lambda = 5/11$ equalizes the coefficients of $\nt$ and $\nl$, yielding the bound
  \[ \nf \geq \nx + \frac{21}{22}\nt + \frac{21}{22}\nl \geq \frac{21}{22}\sizeof{V(G)}.\qedhere \]
\end{proof}

\vspace*{.3in}

\bibliographystyle{amsplain} \bibliography{biblio}

\providecommand{\bysame}{\leavevmode\hbox to3em{\hrulefill}\thinspace}
\providecommand{\MR}{\relax\ifhmode\unskip\space\fi MR }
\providecommand{\MRhref}[2]{%
  \href{http://www.ams.org/mathscinet-getitem?mr=#1}{#2}
}
\providecommand{\href}[2]{#2}
\begin{thebibliography}{10}

\bibitem{ABS}
Ron Aharoni, Eli Berger, and Philipp Spr\"ussel, \emph{Two disjoint independent
  bases in matroid-graph pairs}, Graphs Combin. \textbf{31} (2015), no.~5,
  1107--1116.

\bibitem{ABZ}
Ron Aharoni, Eli Berger, and Ran Ziv, \emph{Independent systems of
  representatives in weighted graphs}, Combinatorica \textbf{27} (2007), no.~3,
  253--267.

\bibitem{Al}
N.~Alon, \emph{The linear arboricity of graphs}, Israel J. Math. \textbf{62}
  (1988), no.~3, 311--325. \MR{955135}

\bibitem{DHH}
Hsu D.~F. Du, D.-Z. and F.~K. Hwang, \emph{The hamiltonian property of
  consecutive-d digraphs}, Math, Comput. Modelling \textbf{17} (1993), 61--63.

\bibitem{erdos-fav}
Paul Erd\H{o}s, \emph{On some of my favourite problems in graph theory and
  block designs}, vol.~45, 1990, Graphs, designs and combinatorial geometries
  (Catania, 1989), pp.~61--73 (1991).

\bibitem{Fel}
Michael~R. Fellows, \emph{Transversals of vertex partitions in graphs}, SIAM J.
  Discrete Math. \textbf{3} (1990), no.~2, 206--215.

\bibitem{fleischner-remarks}
H.~Fleischner and M.~Stiebitz, \emph{Some remarks on the cycle plus triangles
  problem}, The mathematics of {P}aul {E}rd\H os, {II}, Algorithms Combin.,
  vol.~14, Springer, Berlin, 1997, pp.~136--142.

\bibitem{cycletri}
Herbert Fleischner and Michael Stiebitz, \emph{A solution to a colouring
  problem of {P}. {E}rd\h{o}s}, Discrete Math. \textbf{101} (1992), no.~1-3,
  39--48, Special volume to mark the centennial of Julius Petersen's ``Die
  Theorie der regul\"aren Graphs'', Part II.

\bibitem{equitable}
A.~Hajnal and E.~Szemer\'{e}di, \emph{Proof of a conjecture of {P}.
  {E}rd\h{o}s}, Combinatorial theory and its applications, {II} ({P}roc.
  {C}olloq., {B}alatonf\"{u}red, 1969), 1970, pp.~601--623.

\bibitem{haxell-dom}
P.~E. Haxell, \emph{A condition for matchability in hypergraphs}, Graphs
  Combin. \textbf{11} (1995), no.~3, 245--248.

\bibitem{haxell-maxdeg}
\bysame, \emph{A note on vertex list colouring}, Combinatorics, Probability \&
  Computing \textbf{10} (2001), no.~4, 345.

\bibitem{haxell-strong}
\bysame, \emph{On the strong chromatic number}, Combin. Probab. Comput.
  \textbf{13} (2004), no.~6, 857--865.

\bibitem{haxell-improved}
\bysame, \emph{An improved bound for the strong chromatic number}, J. Graph
  Theory \textbf{58} (2008), no.~2, 148--158.

\bibitem{HST}
Penny Haxell, Tibor Szab\'{o}, and G\'{a}bor Tardos, \emph{Bounded size
  components---partitions and transversals}, J. Combin. Theory Ser. B
  \textbf{88} (2003), no.~2, 281--297.

\bibitem{king-haxell}
Andrew~D. King, \emph{Hitting all maximum cliques with a stable set using
  lopsided independent transversals}, J. Graph Theory \textbf{67} (2011),
  no.~4, 300--305.

\bibitem{LS}
Allan Lo and Nicolás Sanhueza-Matamala, \emph{An asymptotic bound for the
  strong chromatic number}, Combinatorics, Probability and Computing (2017).

\bibitem{Pei}
Martin Pei, \emph{The strong chromatic number}, Master's thesis, University of
  Waterloo, 2003.

\bibitem{WestText}
Douglas~B. West, \emph{Introduction to graph theory}, 2 ed., Prentice Hall,
  September 2000.

\end{thebibliography}

\appendix

\renewcommand\thesection{}
\section{}

\renewcommand\thesection{1}
\setcounter{proposition}{6}

Here we present an elementary algorithmic proof of Theorem
\ref{thm:main-2isr}. Our main idea is similar to the proof of
Theorem~\ref{thm:longodd-910}: we'll have both red-coloured edges and
blue-coloured edges and we'd ideally want to partition the vertex set
into two parts so that the two induced graphs each have maximum degree
1 in red and maximum degree 1 in blue. (Such a partition would mean
that the two induced graphs are bipartite, and then the whole graph
would have chromatic number at most 4). We won't be able to show that
both induced graph are bipartite, but we'll \gp{modify} a lemma of
Haxell, Szab\'o, and Tardos (Lemma~2.6 of~\cite{HST}), to get a
stronger condition for one of the induced graphs: that it is not only
bipartite but its two partite sets are two ISRs.

 \begin{theorem}
  If $H$ is a graph with $\Delta(H)\leq 2$ and $V_1, \ldots, V_n$
  are disjoint subsets of $V(H)$ with each $\sizeof{V_i} = 4$,
  then $(V_1, \ldots, V_n)$ has two disjoint ISRs.
\end{theorem}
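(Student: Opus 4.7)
Let $G$ be obtained from $H$ by gluing a copy of $K_4$ onto each $V_i$, and colour the edges of $H$ red and the added edges blue. Then every vertex of $G$ has exactly two red and three blue neighbours, and every blue edge lies inside a single part $V_i$. Finding two disjoint ISRs is equivalent to choosing, for each $i \in [n]$, an ordered pair $(a_i, b_i)$ of distinct vertices of $V_i$ such that $A = \{a_i : i \in [n]\}$ and $B = \{b_i : i \in [n]\}$ are both red-independent; blue conflicts cannot arise because each $V_i$ contributes exactly one vertex to $A$ and one to $B$.

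My plan is to run a Haxell, Szab\'o, and Tardos-style augmenting argument. Call a choice of ordered pairs $(a_i, b_i)_{i \in [n]}$ a \emph{configuration}, and score it by the number of red edges lying entirely in $A$ plus the number lying entirely in $B$. Choose a configuration of minimum score and suppose for contradiction that this score is positive. Fix a conflict edge $e = uu'$, say with $u = a_i$ and $u' = a_{i'}$. The goal is to modify the configuration to strictly decrease the score, contradicting minimality.

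The first attempt is a \emph{local swap} at $V_i$: replace $a_i$ by some other vertex of $V_i$ that is not red-adjacent to the remaining vertices of $A$. Since $\sizeof{V_i} = 4$ and each vertex of $V_i$ has only two red neighbours in $G$, a pigeonhole count on the forbidden replacements often yields such a vertex. When no single-part swap suffices, the modification of Lemma~2.6 of~\cite{HST} should produce an \emph{augmenting chain} of coupled swaps through several parts: a blocked swap at $V_i$ forces a compensating swap at some $V_j$ containing a red neighbour of the attempted replacement, and so on. The chain alternates red and blue edges in a controlled way, and because $\Delta(H)\leq 2$ the auxiliary ``forced-swap'' graph has bounded degree, so the chain either closes into a rotation that improves the score or terminates at an extendable endpoint.

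The main obstacle will be designing the potential function so that the augmenting chain genuinely terminates. In the setting of Haxell, Szab\'o, and Tardos there is slack because each part has size strictly greater than $2\Delta$; here $\sizeof{V_i} = 2\Delta = 4$ exactly, so the red/blue accounting is tight. The extra leverage will come from the rigid blue structure (each $V_i$ induces a complete $K_4$), which lets a failed $A$-swap at $V_i$ be converted into a $B$-swap at the same $V_i$, or vice versa. Formalising this $A$/$B$ exchange within a single part, and showing that iterating these exchanges along the augmenting chain strictly decreases a suitable potential, will be the technical heart of the proof.
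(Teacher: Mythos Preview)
Your plan has a genuine gap: the augmenting-chain argument you sketch is never actually specified, and in the tight case $\sizeof{V_i}=2\Delta(H)=4$ it is not at all clear that any potential built from counting red conflicts in $A$ and $B$ can be made to strictly decrease. You correctly flag this as the ``main obstacle'' and then defer it, so as written the proposal is a hope rather than a proof. The $A$/$B$-exchange idea you gesture at (turning a failed $A$-swap at $V_i$ into a $B$-swap at the same $V_i$) does not obviously terminate: a $B$-swap can create a new $B$-conflict elsewhere, and without a carefully engineered potential the chain may cycle.

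The paper's proof takes a different and more indirect route that sidesteps this obstacle entirely. Rather than searching directly for two ISRs $A,B$, it first looks for a balanced bipartition $V(G)=W_1\cup W_2$ with $\sizeof{W_1}=\sizeof{W_2}$ such that $G[W_1]$ has maximum red degree $1$ and maximum blue degree $1$ and meets each $V_i$ in exactly two vertices; once such a $W_1$ is found, it is bipartite and its two colour classes are the desired ISRs. The idea you are missing is an auxiliary \emph{green} graph $G_2$ consisting of a $4$-cycle on each $V_i$. One applies the Haxell--Szab\'o--Tardos algorithm (their Lemma~2.6) to the pair $(G_1,G_2)=(H,G_2)$, with the small tweak of forcing strict alternation between $W_1$ and $W_2$ so that $\sizeof{W_1}=\sizeof{W_2}$. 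This directly yields $\Delta(G_1[W_1])\leq 1$ and $\Delta(G_2[W_2])\leq 1$. The green condition on $W_2$ forces $\sizeof{W_2\cap V_i}\leq 2$ for every $i$, and then a simple counting argument using $\sizeof{W_1}=\sizeof{W_2}$ gives $\sizeof{W_1\cap V_i}=2$ for every $i$, hence blue degree at most $1$ in $G[W_1]$. So the extra leverage comes not from an $A$/$B$ exchange within a part, but from running the HST partition algorithm against a well-chosen second $2$-regular graph on the same vertex set.
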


\begin{proof}
  By embedding $H$ in a larger graph we may assume that $V_1, V_2, \ldots, V_n$ partition $V(H)$.

  Let $G_1 = H$, let $G_2$ consist of the edges of a $C_4$ on each part $V_i$, and let $G_3$ consist of the edges of a $K_4$ on each part $V_i$. We regard the edges in $G_1$ as colored
  red and the edges in $G_3$ as colored blue (as in the proof of Theorem \ref{thm:longodd-910}); we shall also regard the edges of $G_2$ as colored green.  Let $G$ be the multigraph $G = G_1 \cup G_2 \cup G_3$.  (While the edge
  multiplicity is not relevant to the coloring problem, it simplifies
  things to be able to view a pair of vertices as possibly joined by
  edges of multiple colors.)

\begin{claim}\label{claim:HST} There exists a partition $V(G)$ into two sets $W_1$
    and $W_2$, with $|W_1|=|W_2|$, such that $G_i[W_i]$ has maximum
    degree $1$ for each $i\in\{1,2\}$ (that is, $G[W_1]$ has maximum
    degree $1$ with respect to red edges and $G[W_2]$ has maximum
    degree $1$ with respect to green edges).
\end{claim}

\begin{proofclaim}
  A lemma of Haxell, Szab\'o, and Tardos (Lemma~2.6 of~\cite{HST})
  tells us that we can find a partition of $V(G)$ into two sets $W_1$
  and $W_2$ such that $G_i[W_i]$ to has maximum degree $1$ for each
  $i\in\{1,2\}$ ($G[W_1]$ has maximum degree $1$ with respect to red
  edges and $G[W_2]$ has maximum degree $1$ with respect to green
  edges). We want to get their conclusion with, additionally, $\sizeof{W_1} = \sizeof{W_2}$.

  We form our partition using the following algorithm, which is
  adapted from \cite{HST} with one minor tweak (which we shall point
  out shortly). Fix an orientation of $G_1$ and of $G_2$ with
  maximum outdegree $1$ in each orientation. For each vertex
  $v \in V(G)$, let $v^{1+}$ and $v^{2+}$ denote the successor of $v$
  in $G_1$ and $G_2$, respectively, whenever these successors
  exist. Likewise, we write $v^{1-}$ and $v^{2-}$ for the predecessors
  of $v$ in $G_1$ and $G_2$ when these predecessors exist.

  Initially, set $W_1 = W_2 = \emptyset$.
  We start by adding an arbitrarily-chosen vertex to $W_2$,
  and after this and each subsequent added vertex, we choose
  the next vertex to add as follows:
  \begin{itemize}
  \item For $i=1,2$, if the just-added vertex $v$ was added to $W_i$:
    \begin{enumerate}[(i)]
    \item If $v^{i+}$ exists and is not yet placed, then add $v^{i+}$ to $W_{3-i}$. (Now $v^{i+}$ is the just-added vertex for the next step.)
    \item Otherwise, if $v^{i-}$ exists and is not yet placed, then add $v^{i-}$ to $W_{3-i}$. (Now $v^{i-}$ is the just-added vertex.)
    \item Otherwise, if there is any unplaced vertex $w$, then add $w$ to $W_{3-i}$. (Now $w$ is the just-added vertex.)
    \item Otherwise, terminate.
    \end{enumerate}
  \end{itemize}
  Our algorithm differs from the algorithm of \cite{HST} only in that
  our algorithm \emph{always} alternates between placing a vertex into
  $W_1$ or into $W_2$, while the algorithm of \cite{HST} always places
  the next vertex into $W_1$ when it makes an arbitrary choice in
  Case~(iii). Since $V_1, V_2, \ldots, V_n$ is a partition of $V(H)=V(G)$ where each part has size 4,  we know that
  $\sizeof{V(G)}$ is even, so this alternation guarantees
  $\sizeof{W_1} = \sizeof{W_2} = \sizeof{V(G)}/2$ at the end.

  The proof of Lemma~2.6 of~\cite{HST} immediately implies that
  $\Delta(G_i[W_i]) \leq 1$ for each $i$. (As their proof never
  specifically uses the choice of $W_1$ in Case~(iii), but rather only
  uses the choices made in Case~(i) and Case~(ii), it goes through
  without modification for this version of the algorithm.)
    \end{proofclaim}

    As a result of Claim \ref{claim:HST}, we can show that $G[W_1]$ satisfies some strong conditions.

   \begin{claim}\label{claim:bipW1} $G[W_1]$ has maximum degree $1$ in each of red and blue and it contains exactly two vertices from each $V_i$.
   \end{claim}

   \begin{proofclaim} Taking $W_1, W_2$ as in the previous claim,
   we must additionally show that:
  \begin{enumerate}[(1)]
  \item $G[W_1]$ has maximum degree 1 with respect to blue edges, and
  \item $W_1$ contains exactly two vertices from each $V_i$.
  \end{enumerate}

  If $W_1$ has least three vertices from some $V_i$,
  then this would force $G[W_1]$ to have maximum degree at least 2 with respect to blue edges. Hence, it suffices only to prove (1). To this end, suppose on the
  contrary that some vertex $v \in W_1$ is incident to two blue edges
  within $G[W_1]$. Then $W_1$
  contains at least $3$ vertices of the corresponding copy of $K_4$,
  and $W_2$ contains at most $1$ vertex of that copy of $K_4$. Since
  $\sizeof{W_1} = \sizeof{W_2} = \sizeof{V(G)}/2$, this forces $W_2$
  to contain at least $3$ vertices of some other copy of $K_4$. However this would force $G[W_2]$ to have a vertex with green degree at least two, contradicting our choice of $W_2$.
\end{proofclaim}

Claim \ref{claim:bipW1} implies that not only is $G[W_1]$ bipartite, but that its partite sets (each of which must contain exactly one vertex from each $V_i$ since $G$ has a blue $K_4$ induced on each $V_i$)   are our desired pair of ISRs.
\end{proof}

\end{document}